\documentclass[11pt]{article}

\usepackage[a4paper,margin=1in]{geometry}

\usepackage[T1]{fontenc}
\usepackage[utf8]{inputenc}
\usepackage[english]{babel}

\usepackage{amsmath,amssymb,amsthm}
\usepackage{mathtools}

\usepackage{url}
\usepackage[colorlinks=true,linkcolor=blue,citecolor=blue,urlcolor=blue,hypertexnames=false]{hyperref}
\usepackage[noabbrev]{cleveref}

\usepackage{graphicx}

\theoremstyle{plain}
\newtheorem{theorem}{Theorem}[section]
\newtheorem{lemma}[theorem]{Lemma}
\newtheorem{proposition}[theorem]{Proposition}
\newtheorem{corollary}[theorem]{Corollary}

\theoremstyle{definition}

\newtheorem{assumption}[theorem]{Assumption}

\theoremstyle{remark}
\newtheorem{remark}[theorem]{Remark}

\title{Threshold-Safe Shock Absorption in a Compartmental Voter-Flow Model:\
A Conservative Impulse-Control Benchmark}

\author{%
  Alexander Omelchenko\thanks{%
    Constructor University Bremen gGmbH, Campus Ring~1, 28759 Bremen,
    Germany. Email: \texttt{aomelchenko@constructor.university}.}%
}

\date{\today}

\begin{document}

\maketitle

%% =============================================================
%% Abstract
%% =============================================================

\begin{abstract}
\noindent
We formulate a deterministic threshold-safety problem for a reduced
compartmental voter-flow model.  An exogenous load enters an alienation
reservoir; between releases the reservoir recovers exponentially.  Near the
mainstream baseline the compartmental dynamics have a linear-stability
threshold \(\Delta_c\): below this level the mobilised component contracts,
while above it transient amplification is possible.  The paper introduces an
impulse-control layer for this threshold mechanism.  The threshold is obtained
from the local stability boundary of the reduced dynamical system, the exposure
functional is tied to positive logarithmic amplification, and the scalar
reservoir model is proved to be a conservative envelope of the nonlinear
voter-flow dynamics.  This bridge yields explicit safety benchmarks: the
single-release exposure and its zero buffer, the complete-relaxation splitting
problem with fixed per-release overhead, the finite-recovery constant-peak
profile, and the fixed-horizon capacity frontier \(\Delta_c(1+\rho T)\).  The
scalar recurrence used after the reduction is the familiar leaky-reservoir
skeleton also found in multiple-dose pharmacokinetics, fractionated
radiotherapy, reservoir operation, and setup-cost scheduling.  Its role here is
to make the threshold regimes of the compartmental shock-absorption model
analytically transparent.
\end{abstract}

\medskip
\noindent\emph{Keywords:} threshold dynamics; conservative envelopes; release
scheduling; leaky reservoirs; compartmental models; shock absorption.

\medskip
\noindent\emph{MSC 2020:} 34A37; 34D20; 49N25; 90B05; 91D10.

% =================================================================
% Introduction
% =================================================================

\section{Introduction}
\label{sec:introduction}

This paper studies a deterministic threshold-safety problem for a reduced
compartmental model of political shock absorption.  A fixed nonnegative load is
delivered to a recovering reservoir through a finite sequence of nonnegative
impulses.  Between impulses the reservoir decays.  The system is treated as
locally safe while the reservoir remains below a critical level and as
threshold-active once that level is crossed.  The mathematical questions are how
to split the load, how many releases to use, and how recovery time and fixed
per-release overhead affect threshold safety.

The motivation is a stylised voter-flow model rather than a general theory of
political behaviour.  An external event---for example, a migration-related
administrative surge, an energy-price shock, a fiscal constraint, a security
crisis, or an administrative overload---is represented only by its effective
load on the system.  The load itself is not a decision variable.  The decision
variable, if present, is the absorption profile: one large implementation step,
a small number of batches, or a longer sequence of smaller releases separated
by recovery periods.  The paper asks when the same total load can be staged so
that a structural threshold is not crossed.

The immediate modelling context is the compartmental political-dynamics
literature in which radicalisation, de-radicalisation, and voter-flow mechanisms
are represented by ODE systems.  Existing models primarily analyse the ODEs
themselves: well-posedness, equilibria, reproduction numbers or spectral
thresholds, global stability, and bifurcation regimes
\cite{McCluskeySantoprete2018,SantopreteXu2018,Santoprete2019}.  In related
work, threshold voter-flow models on the probability simplex were used to
distinguish transient state shocks from durable structural threshold crossing
\cite{Omelchenko2026Threshold}.  There is also a broader
optimal-control literature for radicalisation, terrorism, and extreme-ideology
compartmental models, usually based on continuous prevention, disengagement,
deradicalisation, or governance controls and Pontryagin-type necessary
conditions \cite{AzizahBakhtiarSianturi2023,TapsobaSimporeTraore2025}.  The
present problem is different: the control is not a transition-rate intervention
or a persuasion variable, but a finite sequence of impulses describing how a
fixed external load is staged into a threshold reservoir.  To the author's
knowledge, this threshold-safe impulse-staging problem has not been formulated
for the voter-flow threshold class considered here.

The methodological route into the problem came from shock splitting in
supersonic gas dynamics.  Earlier work on optimal shock-wave systems developed
an explicitly solvable discrete-control viewpoint for replacing one large
irreversible step by a cascade of smaller ones \cite{MO1998,MO2003a,MO2006}.
A direct Rankine--Hugoniot analogue is not available here.  In gas dynamics,
shock jumps are constrained by a hyperbolic system of conservation laws, an
equation of state, and entropy admissibility
\cite{CourantFriedrichs1976,LiepmannRoshko1957,Anderson2003,
HendersonMenikoff1998}.  The present model is an ODE system; the update
\(A(t_k^+)=A(t_k^-)+q_k\) is an externally imposed control, not an internal
weak-solution discontinuity; and there is no thermodynamic closure that would
force a unique jump-loss functional.  The replacement is the balance and
comparison structure of the ODE model: a linear balance functional dissipates
along the autonomous flow and increments at impulses, the threshold
\(\Delta_c\) is a stability boundary, the exposure bounds positive logarithmic
amplification of the mobilised component, and the scalar impulse layer is a
conservative envelope of the nonlinear dynamics.  This is the ODE-level version
of the principle that the loss should be derived from the governing dynamics
rather than postulated externally.

Once this threshold and envelope have been extracted, the scalar impulse layer
has close analogues in established operations-research and applied-mathematics
settings.  The recurrence
\[
  A_k=\lambda A_{k-1}+q_k, \qquad \sum_{k=1}^n q_k=Q,
\]
is the same leaky-storage skeleton that appears in one-compartment multiple
dosing, fractionated radiotherapy with incomplete repair, reservoir operation,
and release scheduling with setup costs
\cite{RowlandTozer2011,Bauer2014,WithersThamesPeters1983,Fowler1989,Yeh1985,
WagnerWhitin1958}.  Cumulative carbon-budget models provide a policy-level
analogue of allocating a remaining load under a threshold constraint
\cite{Allen2009,Meinshausen2009}.  These parallels are used deliberately: they
provide transparent benchmark problems for the scalar envelope.  The claim of
the paper is the bridge from a compartmental voter-flow threshold to this
impulse-control benchmark, not the rediscovery of the scalar recurrence.

The paper records four consequences of this reduction.

\smallskip\noindent
\emph{First}, the exposure generated by a single release is computed in closed
form.  The exposure is exactly zero for releases not exceeding \(\Delta_c\), is
convex on the nonnegative axis, is continuously differentiable at the
threshold, and has quadratic onset just above it.  Thus the benefit of splitting
is not introduced by an externally chosen quadratic penalty; it follows from the
threshold dynamics themselves.

\smallskip\noindent
\emph{Second}, the scalar exposure is tied back to the nonlinear two-variable
compartmental system.  A linear balance functional dissipates along the full
dynamics and increments by a fixed amount at each impulse.  The resulting
single-reservoir exposure bounds the positive logarithmic amplification of the
mobilised component and is sharp in the small-mobilisation limit.  The reduced
impulse model is therefore used as a conservative envelope, not as an exact
replacement of the nonlinear dynamics.

\smallskip\noindent
\emph{Third}, in the complete-relaxation benchmark, where the reservoir returns
to baseline before the next release, the exposure-minimising split of a fixed
load into a fixed number of releases is uniform.  Full threshold safety is
achieved at the finite count \(N_{\rm safe}=\lceil Q/\Delta_c\rceil\).  If each
release carries a fixed overhead \(K\), the model gives an explicit boundary
between the regime in which full safety is cost-optimal and the regime in which
the optimum accepts positive residual exposure to avoid additional stages.

\smallskip\noindent
\emph{Fourth}, under finite recovery the paper separates peak safety from
cumulative exposure.  For a fixed number of releases, the peak-minimising
benchmark has a closed-form front-loaded solution: the first release fills the
reservoir to the target peak, and subsequent releases replenish the portion that
has decayed since the previous stage.  Under a fixed crisis horizon \(T\), the
reduced model also yields the supremal capacity frontier
\(\Delta_c(1+\rho T)\): loads above this value cannot be kept below the
threshold by any finite splitting strategy, while loads below it can be made
safe by sufficiently fine staging.

The distinction between the two scalar objectives used below is important.  The
complete-relaxation part minimises cumulative threshold exposure.  The
finite-recovery part minimises the peak reservoir level and is therefore a
safety-capacity benchmark.  These are two reductions of the same threshold
mechanism, but they are not the same optimisation problem.

The paper is organised as follows.  Section~\ref{sec:background} fixes the
absorption interpretation and modelling dictionary.  Section~\ref{sec:problem}
introduces the reduced compartmental system, the balance identity, the
conservative impulse envelope, and the discrete release recursion.
Section~\ref{sec:single_shock} derives the single-shock exposure function and
its spectral-threshold interpretation.  Section~\ref{sec:full_relaxation}
solves the complete-relaxation splitting problem and incorporates fixed
per-stage overhead.  Section~\ref{sec:finite_recovery_bellman} gives the
finite-recovery peak-safety benchmark and the fixed-horizon capacity frontier.
Section~\ref{sec:phase_diagram} summarises the resulting phase structure, and
Section~\ref{sec:discussion} discusses the conservative status of the reduced
impulse model and possible extensions.

% =================================================================
% Background and modelling assumptions
% =================================================================

\section{Background and modelling assumptions}\label{sec:background}

This section fixes the modelling dictionary used in the formal analysis.  The
political terminology below is interpretive; the equations and optimisation
problems are introduced in Section~\ref{sec:problem}.  The term ``safe'' is used
only in the technical sense of local non-amplification in the reduced threshold
system.

A political system may face an external load whose origin is outside the reduced
two-variable mechanism: a migration-related administrative surge, an
energy-price shock, a fiscal constraint, a security crisis, or an abrupt
administrative overload.  The model represents such an event by a fixed
nonnegative quantity \(Q\).  The decision variable is not the existence or
substantive content of the shock, but its absorption profile: one large release,
a small number of batches, or a longer sequence of smaller releases separated by
recovery periods.

The reduced model uses two state variables.  The reservoir \(A(t)\) is an
effective stock of stress, backlog, or alienation.  It dissipates at rate
\(\rho>0\) when no fresh load is added.  The variable \(S(t)\) is a local
mobilisation intensity near the mainstream baseline.  It is not used below as a
globally bounded population share; it is the local activity coordinate whose
linear growth or decay determines whether a transient mobilisation window is
open.  The following dictionary records the notation.

\begin{center}
\small
\begin{tabular}{@{}p{0.22\textwidth}p{0.13\textwidth}p{0.54\textwidth}@{}}
\hline
\textbf{Modelling term} & \textbf{Symbol} & \textbf{Mathematical role}\\
\hline
Total external load & \(Q\) & Fixed nonnegative quantity to be absorbed.\\[2pt]
Release or batch & \(q_j\) & Nonnegative impulse added to the reservoir; the releases satisfy
\(\sum_j q_j=Q\).\\[2pt]
Alienation reservoir & \(A(t)\) & Effective stress coordinate; it decays between releases at rate \(\rho\).\\[2pt]
Mobilisation intensity & \(S(t)\) & Local amplitude of non-mainstream mobilisation near the baseline; its
linear growth is controlled by \(A(t)\).\\[2pt]
Critical threshold & \(\Delta_c\) & Reservoir level at which the baseline changes local stability.\\[2pt]
Recovery rate & \(\rho\) & Rate at which the reservoir dissipates between releases.\\[2pt]
Per-release overhead & \(K\) & Fixed administrative or operational cost of using one additional stage.\\
\hline
\end{tabular}
\end{center}

Three dimensionless quantities organise the scalar benchmarks: \(Q/\Delta_c\),
which measures the load in threshold units; \(\rho T\), which measures available
recovery over a horizon \(T\); and \(K\rho/(\mu-\beta)\), which measures fixed
per-release overhead in the natural one-shock exposure scale.  The model is not
an electoral forecast, a persuasion model, or a statistical description of a
particular country.  It isolates a threshold-safe absorption mechanism for a
fixed load in a conservative scalar envelope.

% ============================================================
% Problem formulation: threshold-safe shock absorption
% ============================================================

\section{A reduced shock-absorption problem}\label{sec:problem}

We now turn the modelling dictionary into equations.  The section has two
roles.  First, it introduces the reduced two-variable threshold mechanism and
identifies the local stability threshold used later.  Second, it replaces the
reservoir equation by a scalar impulse envelope and proves that this envelope is
conservative for the nonlinear system.

\subsection{Origin of the symmetric two-variable reduction}
\label{subsec:origin_symmetric_reduction}

The reduced system used below is a self-contained symmetric local block
motivated by the author's voter-flow threshold models
\cite{Omelchenko2026Threshold}.  The short derivation is included here so that
the control problem does not rely on unpublished model details.  Let \(C\) denote the
mainstream baseline level, \(S\) the local mobilisation intensity, and \(A\) the
alienation reservoir.  In the symmetric one-polarity reduction, the mobilisation
coordinate has the linear-in-\(S\) balance
\begin{equation}\label{eq:S_origin_balance}
  \dot S=S\bigl(\beta C+\delta A-\mu\bigr),
\end{equation}
where \(\beta C\) is baseline mobilisation from the mainstream compartment,
\(\delta A\) is reactivation from the reservoir, and \(\mu\) is return or
reabsorption.  The reservoir recovers autonomously and is also depleted by the
mobilisation channel:
\begin{equation}\label{eq:A_origin_balance}
  \dot A=-\rho A-\delta SA.
\end{equation}
In the local symmetric coordinates, \(C=1-A-S\).  Substituting this expression
into \eqref{eq:S_origin_balance} gives
\[
  \dot S
  =S\bigl[(\beta-\mu)-\beta S+(\delta-\beta)A\bigr],
\]
while \eqref{eq:A_origin_balance} gives \(\dot A=-(\rho+\delta S)A\).  Thus the
closed two-variable reduction is
\begin{equation}\label{eq:reduced_SA_full}
\begin{aligned}
  \dot S
  &=S\bigl[(\beta-\mu)-\beta S+(\delta-\beta)A\bigr],\\
  \dot A
  &=-(\delta S+\rho)A.
\end{aligned}
\end{equation}
The subsequent analysis uses only the following structural properties of this
local block: the nonnegative invariance of \(S\), an affine near-baseline growth
pressure increasing in \(A\), autonomous reservoir recovery, additional
reservoir depletion when \(S>0\), and additive impulses in the reservoir
coordinate.  The symmetric voter-flow reduction above is one source of this
structure; the scalar benchmarks below apply to this structural class.

\begin{assumption}[Affine stress-coordinate continuation]
\label{ass:affine_continuation}
The compartmental reduction fixes the local stability threshold and the sign of
the near-baseline growth pressure.  In the impulse layer, \(A\) is not used as a
literal population share once a trajectory leaves the local simplex range.  It
is used as an effective stress coordinate calibrated so that
\(A=\Delta_c\) is the local stability boundary.  For \(A>\Delta_c\), the affine
continuation
\[
  g(A)=(\delta-\beta)(A-\Delta_c)
\]
is used as a conservative risk proxy, not as a claim that the original
population-share interpretation remains valid globally.
\end{assumption}

\subsection{The symmetric threshold mechanism}
\label{subsec:symmetric_threshold_mechanism}

All parameters in \eqref{eq:reduced_SA_full} are positive.  The state \(S=0\)
is invariant, and if \(S(0)\ge0\) then \(S(t)\ge0\) as long as the solution
exists.  The term \(-\beta S^2\) is a local saturation term.  It should not be
read as a full population-simplex constraint; it only prevents the local
mobilisation coordinate from growing linearly without bound inside the reduced
model.  The product term \(-\delta SA\) in the second equation is important for
the conservative-envelope argument: the full two-variable system drains the
reservoir at least as fast as the scalar envelope introduced later.

We work in the shock-sensitive regime
\begin{equation}\label{eq:shock_sensitive_regime}
  0<\beta<\mu<\delta.
\end{equation}
If the reservoir is empty, then the linear growth rate of \(S\) at \(S=0\) is
\(\beta-\mu<0\), so the mainstream baseline is locally attracting.  If the
reservoir is sufficiently large, the term \((\delta-\beta)A\) can reverse this
sign.  The near-baseline growth pressure is
\begin{equation}\label{eq:gA}
  g(A):=(\beta-\mu)+(\delta-\beta)A.
\end{equation}
It changes sign at
\begin{equation}\label{eq:Deltac_problem}
  \Delta_c:=\frac{\mu-\beta}{\delta-\beta}.
\end{equation}
Under \eqref{eq:shock_sensitive_regime}, one has \(0<\Delta_c<1\).  This
inequality reflects the normalisation of the symmetric compartmental block; it
is not an upper bound on admissible impulse loads.  When \(A\le\Delta_c\), the
linearised mobilisation intensity does not grow.  When \(A>\Delta_c\), the
reduced model enters a threshold-active interval.

This motivates the path-level threshold exposure
\begin{equation}\label{eq:threshold_exposure_path}
  \mathcal E[A]
  :=\int_0^\infty [g(A(t))]_+\,dt,
  \qquad [x]_+:=\max\{x,0\}.
\end{equation}
For small \(S\), the logarithmic growth rate of \(S\) is \(g(A)+O(S)\).  Hence
\eqref{eq:threshold_exposure_path} records the positive part of the
near-mainstream growth pressure generated by the reservoir path.  Positive-part
exposure functionals occur in threshold and tolerance models
\cite{RowlandTozer2011,Fowler1989}; here the threshold and pressure come from
linearising \eqref{eq:reduced_SA_full}.

\begin{proposition}[Balance identity and logarithmic exposure]
\label{prop:balance_log_exposure}
Along the full symmetric dynamics \eqref{eq:reduced_SA_full}, without impulses,
set
\begin{equation}\label{eq:alpha_gamma_balance_defs}
  \alpha:=\delta-\beta,
  \qquad
  \gamma:=\mu-\beta,
  \qquad
  \Phi(S,A):=S+\frac{\alpha}{\delta}A .
\end{equation}
Then, on the nonnegative quadrant,
\begin{equation}\label{eq:balance_identity}
  \frac{d}{dt}\Phi(S(t),A(t))
  =-\gamma S(t)-\beta S(t)^2-\frac{\alpha\rho}{\delta}A(t)
  \qquad
  \text{between impulses.}
\end{equation}
In particular, \(\Phi\) is a strict balance functional away from the origin.  If
an impulse of size \(q\ge0\) is applied to the reservoir, then
\begin{equation}\label{eq:balance_jump}
  \Phi(S,A+q)-\Phi(S,A)=\frac{\alpha}{\delta}q .
\end{equation}
Moreover, for every solution with \(S(0)>0\) and every \(T>0\),
\begin{equation}\label{eq:log_growth_bound}
  \log \frac{S(T)}{S(0)}
  \le
  \int_0^T [g(A(t))]_+\,dt .
\end{equation}
\end{proposition}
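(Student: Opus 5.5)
The proof is a direct computation in three independent parts, using only the reduced system \eqref{eq:reduced_SA_full} and the abbreviations \(\alpha=\delta-\beta\), \(\gamma=\mu-\beta\), under which \(g(A)=-\gamma+\alpha A\).

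\emph{The balance identity \eqref{eq:balance_identity}.} Differentiate \(\Phi\) along solutions:
\[
\dot\Phi=\dot S+\frac{\alpha}{\delta}\dot A
=S\bigl[-\gamma-\beta S+\alpha A\bigr]-\frac{\alpha}{\delta}(\delta S+\rho)A .
\]
The cross terms \(\alpha SA\) and \(-\alpha SA\) cancel exactly. This cancellation is the reason for the weight \(\alpha/\delta\) in \(\Phi\). What remains is \(-\gamma S-\beta S^2-(\alpha\rho/\delta)A\). Under \eqref{eq:shock_sensitive_regime} we have \(\gamma,\beta,\alpha,\rho,\delta>0\), so on the nonnegative quadrant every term is nonpositive. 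The right-hand side vanishes only when \(S=A=0\), which gives strict dissipation away from the origin.

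\emph{The jump relation \eqref{eq:balance_jump}.} \(\Phi\) is affine in \(A\) with coefficient \(\alpha/\delta\). An impulse changes only \(A\), so it changes \(\Phi\) by \(\frac{\alpha}{\delta}q\).

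\emph{The logarithmic bound \eqref{eq:log_growth_bound}.} I first record two invariance facts.
\begin{itemize}
\item Since \(S=0\) is invariant, uniqueness gives \(S(t)>0\) on the existence interval when \(S(0)>0\). This makes \(\log S\) well defined.
\item Similarly \(A=0\) is invariant and \(A\) solves a linear equation in \(A\), so \(A(t)=A(0)\exp\bigl(-\int_0^t(\delta S+\rho)\,ds\bigr)\ge0\).
\end{itemize}
Then
\[
\frac{d}{dt}\log S=-\gamma-\beta S+\alpha A=g(A)-\beta S\le g(A)\le [g(A)]_+ ,
\]
using \(\beta S\ge0\). Integrating over \([0,T]\) gives the claim.

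The main obstacle is existence up to time \(T\). For this I will show the solution stays bounded.
\begin{itemize}
\item \(A\) is nonincreasing, so \(0\le A\le A(0)\).
\item \(\dot S\le S(-\gamma+\alpha A(0))-\beta S^2\), so logistic comparison bounds \(S\) above.
\item Alternatively, \(\Phi\) is nonincreasing and bounds \(S\le\Phi(S(0),A(0))\).
\end{itemize}
Either way the solution is bounded, hence global, so every \(T>0\) is admissible.
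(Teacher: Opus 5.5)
Your proposal is correct and follows the paper's proof essentially line for line: differentiating \(\Phi\) so that the \(\alpha SA\) cross terms cancel, reading the jump identity off the affine dependence of \(\Phi\) on \(A\), and integrating \(\frac{d}{dt}\log S=g(A)-\beta S\le[g(A)]_+\) over \([0,T]\). Your additional checks are valid and fill in points the paper leaves implicit: positivity of \(S\), and existence of the solution on all of \([0,T]\) via \(0\le A\le A(0)\) plus logistic comparison, or via monotonicity of \(\Phi\).
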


\begin{proof}
The derivative of \(\Phi\) along \eqref{eq:reduced_SA_full} is
\begin{align*}
  \frac{d}{dt}\Phi(S,A)
  &=S\bigl[-\gamma-\beta S+\alpha A\bigr]
    +\frac{\alpha}{\delta}\bigl[-(\rho+\delta S)A\bigr] \\
  &=-\gamma S-\beta S^2-\frac{\alpha\rho}{\delta}A,
\end{align*}
which gives \eqref{eq:balance_identity}.  The jump identity
\eqref{eq:balance_jump} follows from the definition of \(\Phi\).  Finally, for
\(S(t)>0\),
\begin{equation*}
  \frac{d}{dt}\log S(t)=g(A(t))-\beta S(t)
  \le g(A(t))
  \le [g(A(t))]_+ .
\end{equation*}
Integrating over \([0,T]\) gives \eqref{eq:log_growth_bound}.
\end{proof}

This proposition is the first genuinely model-specific step.  The impulse
\(q\) contributes a fixed amount of the balance potential \(\Phi\), but the
risk of transient mobilisation is not controlled by this total input alone.  It
is controlled by the portion of the trajectory on which the logarithmic growth
pressure of \(S\) becomes positive.  Thus \(\mathcal E[A]\) is used as a
conservative upper bound on positive logarithmic amplification of the mobilised
component, not as a quadratic penalty chosen for convenience.

\subsection{The scalar impulse envelope}
\label{subsec:impulse_layer}

The control problem concerns the way a fixed external load enters the reservoir.
The load itself is fixed; the decision is its decomposition into effective
releases.  By analogy with standard linear-storage and multiple-dosing models,
we use the mobilisation-free reservoir equation
\begin{equation}\label{eq:A_impulse_continuous}
  \dot A(t)=-\rho A(t)
  \qquad
  \text{for } t\notin\{t_1,\ldots,t_n\},
\end{equation}
with jumps
\begin{equation}\label{eq:A_impulse_jump}
  A(t_k^+)=A(t_k^-)+q_k,
  \qquad q_k\ge0,
  \qquad k=1,\ldots,n.
\end{equation}
The releases satisfy the fixed budget constraint
\begin{equation}\label{eq:shock_budget}
  \sum_{k=1}^n q_k=Q,
  \qquad Q>0.
\end{equation}
The impulse idealisation means that each release occurs on a timescale much
shorter than the recovery time of the reservoir.

This scalar layer is the canonical leaky-reservoir equation used in dosing,
storage, and release-scheduling models
\cite{RowlandTozer2011,Bauer2014,Yeh1985,WithersThamesPeters1983,Fowler1989}.
Its role here is to provide a conservative envelope for the reservoir component
of \eqref{eq:reduced_SA_full}.  The next proposition makes this precise.

\subsection{Conservative status of the reduced impulse layer}
\label{subsec:conservative_impulse_layer}

\begin{proposition}[Conservative envelope property]
\label{prop:conservative_envelope}
Consider the same nonnegative impulse sequence \(q_1,\ldots,q_n\) applied at the
same times to the scalar envelope and to the full symmetric system.  Let
\(A_{\rm red}\) solve
\begin{equation}\label{eq:A_red_envelope}
  \dot A_{\rm red}=-\rho A_{\rm red},
  \qquad
  A_{\rm red}(t_j^+)=A_{\rm red}(t_j^-)+q_j,
\end{equation}
and let \((S_{\rm full},A_{\rm full})\) solve
\begin{equation}\label{eq:full_envelope_system}
\begin{aligned}
  \dot S_{\rm full}
  &=S_{\rm full}\bigl[(\beta-\mu)-\beta S_{\rm full}
      +(\delta-\beta)A_{\rm full}\bigr],\\
  \dot A_{\rm full}
  &=-(\rho+\delta S_{\rm full})A_{\rm full},
\end{aligned}
\end{equation}
with the same impulse updates for \(A_{\rm full}\).  Assume that the initial
reservoir levels are equal and nonnegative, and that \(S_{\rm full}(0)\ge0\).
Then
\begin{equation}\label{eq:A_full_leq_red}
  A_{\rm full}(t)\le A_{\rm red}(t)
  \qquad\text{for all }t\ge0.
\end{equation}
Consequently, for every \(T>0\),
\begin{equation}\label{eq:exposure_reduced_dominates_full}
  \int_0^T [g(A_{\rm full}(t))]_+\,dt
  \le
  \int_0^T [g(A_{\rm red}(t))]_+\,dt.
\end{equation}
In particular, if \(A_{\rm red}(t)\le\Delta_c\) for all \(t\in[0,T]\), then the
near-mainstream growth pressure in the full symmetric system is nonpositive on
that interval.
\end{proposition}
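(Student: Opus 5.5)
The plan is a comparison argument carried out interval by interval between consecutive impulse times, followed by monotonicity of \(g\).

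\textbf{Nonnegativity.} First I will record that \(S_{\rm full}(t)\ge0\) and \(A_{\rm full}(t)\ge0\) for all \(t\) at which the solution exists.
\begin{itemize}
\item \(S=0\) is invariant for the \(S\)-equation, which is linear in \(S\) with a continuous coefficient. Hence \(S_{\rm full}(t)=S_{\rm full}(0)\exp\int_0^t[\cdots]\,ds\ge0\).
\item Similarly, on each inter-impulse interval \(A_{\rm full}\) solves a linear equation. Explicitly, \(A_{\rm full}(t)=A_{\rm full}(s^+)\exp\bigl(-\int_s^t(\rho+\delta S_{\rm full})\,d\tau\bigr)\), which is nonnegative whenever the post-jump value is.
\item Jumps add \(q_j\ge0\), so nonnegativity persists by induction over the impulses.
\end{itemize}
Global existence is not in question for the comparison: \(A_{\rm full}\) is bounded by the explicit \(A_{\rm red}\), shown below. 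Then \(\dot S\le S(\gamma'+\alpha A_{\rm red}-\beta S)\) is logistic-type, so \(S_{\rm full}\) stays bounded.

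\textbf{Comparison on each interval.} Let \(t_0=0\) and take an interval \((t_{j-1},t_j)\). Suppose \(A_{\rm full}(t_{j-1}^+)\le A_{\rm red}(t_{j-1}^+)\). Using the explicit formulas,
\[
A_{\rm full}(t)=A_{\rm full}(t_{j-1}^+)\,e^{-\rho(t-t_{j-1})}\,e^{-\delta\int_{t_{j-1}}^t S_{\rm full}\,d\tau}
\le A_{\rm red}(t_{j-1}^+)\,e^{-\rho(t-t_{j-1})}=A_{\rm red}(t).
\]
This uses \(S_{\rm full}\ge0\), \(A_{\rm full}(t_{j-1}^+)\ge0\) and \(\delta>0\). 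Letting \(t\to t_j^-\) gives \(A_{\rm full}(t_j^-)\le A_{\rm red}(t_j^-)\). Adding the same \(q_j\) to both sides preserves the inequality at \(t_j^+\). Induction on \(j\), starting from the equal initial levels, together with the last interval \((t_n,\infty)\), yields \eqref{eq:A_full_leq_red}.

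\textbf{Exposure bound and consequence.} The map \(A\mapsto[g(A)]_+\) is nondecreasing, since \(g\) is affine with slope \(\alpha=\delta-\beta>0\) under \eqref{eq:shock_sensitive_regime} and \(x\mapsto[x]_+\) is nondecreasing. Therefore \([g(A_{\rm full}(t))]_+\le[g(A_{\rm red}(t))]_+\) pointwise. Integrating over \([0,T]\) gives \eqref{eq:exposure_reduced_dominates_full}; both integrands are piecewise continuous and bounded on \([0,T]\).

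For the final claim, \(A_{\rm red}\le\Delta_c\) on \([0,T]\) implies \(A_{\rm full}\le\Delta_c\) there. Since \(g(\Delta_c)=0\) and \(g\) is increasing, \(g(A_{\rm full}(t))\le0\) on that interval.

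\textbf{Main obstacle.} The only delicate point is justifying nonnegativity of \(S_{\rm full}\) and \(A_{\rm full}\), together with existence across all impulse intervals. Without these, the factor \(e^{-\delta\int S}\) could exceed one and the comparison would fail. I would handle this first, via the explicit exponential representations, before running the induction.
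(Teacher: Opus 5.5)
Your proposal is correct and follows the paper's proof. It establishes nonnegativity of \(S_{\rm full}\), compares the two reservoirs on each inter-impulse interval with the common jumps preserving the ordering, and then uses monotonicity of \(g\) and integration, with your explicit exponential formula making the paper's appeal to the comparison principle concrete and additionally covering nonnegativity of \(A_{\rm full}\) and global existence, which the paper leaves implicit.
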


\begin{proof}
The equation for \(S_{\rm full}\) preserves nonnegativity, so
\(S_{\rm full}(t)\ge0\).  Between two consecutive impulses, the full reservoir
satisfies
\[
  \dot A_{\rm full}=-(\rho+\delta S_{\rm full})A_{\rm full},
\]
whereas the scalar envelope satisfies \(\dot A_{\rm red}=-\rho A_{\rm red}\).
Thus the full reservoir decays at least as fast as the scalar envelope.  At an
impulse time both reservoir variables receive the same nonnegative jump, which
preserves the ordering.  The comparison principle therefore gives
\eqref{eq:A_full_leq_red} on each inter-release interval and hence on the whole
time line.

Because \(\delta>\beta\), the function \(g(A)\) in \eqref{eq:gA} is increasing.
Thus \([g(A_{\rm full}(t))]_+\le [g(A_{\rm red}(t))]_+\), and integration gives
\eqref{eq:exposure_reduced_dominates_full}.  If
\(A_{\rm red}(t)\le\Delta_c\), then \(A_{\rm full}(t)\le\Delta_c\), so
\(g(A_{\rm full}(t))\le0\) by the definition of \(\Delta_c\).
\end{proof}

The converse is not asserted.  If the scalar envelope crosses the threshold,
the full nonlinear system may still be moderated by the additional depletion
term \(-\delta SA\) and by the saturation term \(-\beta S^2\).  Thus the scalar
layer is a conservative design problem: safety in the envelope is a sufficient
condition for local threshold safety in the symmetric mechanism, while failure
of the envelope condition is not a prediction of unavoidable mobilisation.

Figure~\ref{fig:envelope_comparison} illustrates the comparison for one
representative impulse sequence.  The parameters are \(\beta=0.6\), \(\mu=1\),
\(\delta=1.8\), \(\rho=0.5\), so \(\Delta_c=1/3\); impulses are applied at
\(t=0,2,4,6,8\) with sizes \(0.46,0.24,0.24,0.24,0.24\), and \(S(0)=0.08\).
The scalar envelope crosses the threshold in this illustrative run; the purpose
of the plot is to show dominance of the full reservoir by the conservative
scalar benchmark, not to display a safe schedule.

\begin{figure}[t]
  \centering
  \includegraphics[width=0.72\textwidth]{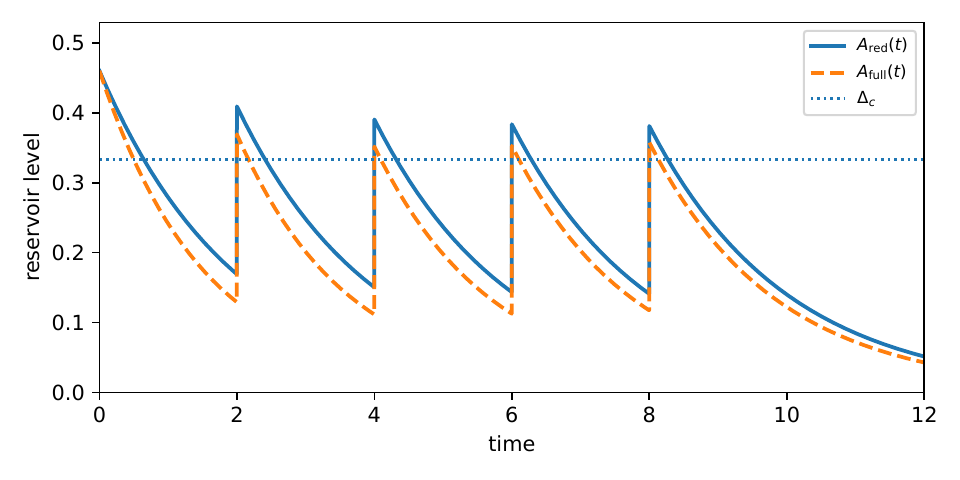}
  \caption{Scalar envelope and nonlinear reservoir path}
  \label{fig:envelope_comparison}
\end{figure}

\subsection{Discrete releases and time conventions}
\label{subsec:discrete_releases_time_conventions}

The continuous impulse notation fixes the model, but the later optimisation
variables are finite-dimensional.  Suppose first that consecutive releases are
separated by a fixed time interval \(\tau>0\).  The reservoir then decays by the
factor
\begin{equation}\label{eq:lambda_def}
  \lambda:=e^{-\rho\tau}\in(0,1).
\end{equation}
Let \(A_k\) be the reservoir level immediately after the \(k\)-th release,
starting from \(A_0=0\).  The scalar envelope becomes
\begin{equation}\label{eq:Ak_discrete}
  A_k=\lambda A_{k-1}+q_k,
  \qquad k=1,\ldots,n.
\end{equation}
The number \(1-\lambda\) is the fraction of the reservoir that dissipates
between successive releases.  In pharmacokinetic language, \(\lambda\) is the
inter-dose carry-over factor and \(A_k\) is the post-dose level; in storage
language, \(\lambda\) is the retention factor.  Below we use the same recurrence
only as a threshold-safety envelope for the compartmental model.

Two time conventions will be used.  In the complete-relaxation benchmark, the
intervals between releases are long enough that each release starts from an
empty reservoir.  This leads to the static cumulative-exposure splitting
problem in Section~\ref{sec:full_relaxation}.  In the finite-recovery benchmark,
\(\lambda\) is fixed and the recursion \eqref{eq:Ak_discrete} is used directly;
this leads to the peak-safety problem in
Section~\ref{sec:finite_recovery_bellman}.

A different convention is needed when the whole absorption process must be
completed within a fixed crisis horizon \(T\).  If \(n\ge2\) releases are equally
spaced over that horizon, then
\begin{equation}\label{eq:lambda_n_fixed_T}
  \tau_n:=\frac{T}{n-1},
  \qquad
  \lambda_n:=e^{-\rho T/(n-1)}.
\end{equation}
Increasing \(n\) then gives more releases but less recovery time between two
neighbouring releases.  This trade-off is responsible for the fixed-horizon
capacity bound derived in Section~\ref{sec:finite_recovery_bellman}.

The dimensionless parameters that organise the reduced problem are
\begin{equation}\label{eq:dimensionless_parameters}
  \frac{Q}{\Delta_c},
  \qquad
  \rho T,
  \qquad
  \frac{K\rho}{\mu-\beta}.
\end{equation}
They measure, respectively, the load size relative to the threshold, the
available recovery capacity over the horizon, and the fixed per-release
overhead relative to the natural one-shock exposure scale
\((\mu-\beta)/\rho\).  These quantities reappear in the phase diagram in
Section~\ref{sec:phase_diagram}.

The next section analyses the model-specific ingredient of the scalar layer:
the threshold exposure produced by a single release when the threshold and the
growth pressure are derived from the compartmental mechanism above.

% ============================================================
% Central block: single-shock threshold exposure
% ============================================================

\section{The single-shock threshold exposure}\label{sec:single_shock}

This section computes the one-release exposure associated with the stability
pressure
\[
  g(A)=(\beta-\mu)+(\delta-\beta)A
\]
and the threshold \(\Delta_c\).  Analytically, this is the integrated exceedance
of an exponentially decaying stock above a threshold, a calculation that appears
in dosing, tolerance, and storage models
\cite{RowlandTozer2011,Bauer2014,Fowler1989}.  In the present model the
threshold is the linear stability boundary of the voter-flow mechanism.

A release of size \(q\ge0\) is added to an initially empty reservoir at time
\(t=0\).  No further load is added.  The scalar envelope is therefore
\begin{equation}\label{eq:A_single_shock_path}
  A_q(t)=q e^{-\rho t},
  \qquad t\ge0.
\end{equation}
Keeping the notation
\[
  \alpha:=\delta-\beta>0,
  \qquad
  \gamma:=\mu-\beta>0,
  \qquad
  \Delta_c=\frac{\gamma}{\alpha},
\]
we have
\begin{equation}\label{eq:g_alpha_form}
  g(A)=\alpha(A-\Delta_c).
\end{equation}
The single-release exposure is
\begin{equation}\label{eq:E_single_def}
  E(q):=\int_0^\infty [g(qe^{-\rho t})]_+\,dt
       =\int_0^\infty
       \left[(\beta-\mu)+(\delta-\beta)q e^{-\rho t}\right]_+dt,
\end{equation}
where \([x]_+:=\max\{x,0\}\).  It records only the time interval during which
the reservoir path lies above the stability threshold.

\begin{lemma}[Single-release threshold exposure]\label{lem:single_shock_exposure}
Assume \(0<\beta<\mu<\delta\).  The function
\(E:[0,\infty)\to[0,\infty)\) defined by \eqref{eq:E_single_def} is finite and
is given by
\begin{equation}\label{eq:E_piecewise_formula}
  E(q)=
  \begin{cases}
    0,
    &0\le q\le \Delta_c,\\[0.6em]
    \displaystyle
    \frac{\delta-\beta}{\rho}
    \left(
      q-\Delta_c-\Delta_c\log\frac{q}{\Delta_c}
    \right),
    &q>\Delta_c.
  \end{cases}
\end{equation}
Moreover, \(E\) is nonnegative, nondecreasing, convex on \([0,\infty)\),
identically zero on \([0,\Delta_c]\), and strictly convex on
\((\Delta_c,\infty)\).  It is continuously differentiable, with
\begin{equation}\label{eq:E_derivatives}
  E'(q)=0 \quad (0<q<\Delta_c),
  \qquad
  E'(q)=\frac{\delta-\beta}{\rho}
        \left(1-\frac{\Delta_c}{q}\right)
        \quad (q>\Delta_c),
\end{equation}
and, for \(q>\Delta_c\),
\begin{equation}\label{eq:E_second_derivative}
  E''(q)=\frac{\delta-\beta}{\rho}\frac{\Delta_c}{q^2}>0.
\end{equation}
Near the threshold,
\begin{equation}\label{eq:E_quadratic_asymptotics}
  E\bigl(\Delta_c(1+\varepsilon)\bigr)
  =
  \frac{(\delta-\beta)\Delta_c}{2\rho}\,\varepsilon^2
  +O(\varepsilon^3),
  \qquad \varepsilon\downarrow0.
\end{equation}
At large release sizes,
\begin{equation}\label{eq:E_large_q_asymptotics}
  E(q)
  =
  \frac{\delta-\beta}{\rho}\,q
  -\frac{\mu-\beta}{\rho}\log\frac{q}{\Delta_c}
  -\frac{\mu-\beta}{\rho},
  \qquad q>\Delta_c.
\end{equation}
\end{lemma}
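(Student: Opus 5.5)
The plan is a direct computation, with \eqref{eq:g_alpha_form} as the starting point. Since \(A_q(t)=qe^{-\rho t}\) is nonincreasing in \(t\), the integrand \([\alpha(qe^{-\rho t}-\Delta_c)]_+\) is nonzero only on an initial interval.

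\emph{Case \(q\le\Delta_c\).} The integrand vanishes for every \(t\ge0\), so \(E(q)=0\).

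\emph{Case \(q>\Delta_c\).} The path crosses the threshold exactly once, at
\[
  t^*=\rho^{-1}\log(q/\Delta_c).
\]
Hence
\[
  E(q)=\alpha\int_0^{t^*}\bigl(qe^{-\rho t}-\Delta_c\bigr)\,dt
      =\frac{\alpha}{\rho}\bigl(q-qe^{-\rho t^*}\bigr)-\alpha\Delta_c t^*.
\]
Substituting \(qe^{-\rho t^*}=\Delta_c\) gives \eqref{eq:E_piecewise_formula}, and finiteness follows. Using \(\alpha\Delta_c=\gamma=\mu-\beta\), the same expression rewrites as \eqref{eq:E_large_q_asymptotics}. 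This is an exact identity, despite being labelled an asymptotic.

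Regularity I would read off from the closed form. On \((\Delta_c,\infty)\), differentiate to get
\[
  E'(q)=\frac{\alpha}{\rho}\bigl(1-\Delta_c/q\bigr)>0,
  \qquad
  E''(q)=\frac{\alpha}{\rho}\,\frac{\Delta_c}{q^2}>0,
\]
which gives \eqref{eq:E_derivatives}, \eqref{eq:E_second_derivative}, monotonicity and strict convexity there.

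The one point needing care is the gluing at \(q=\Delta_c\).
\begin{itemize}
\item Both branches give \(E(\Delta_c)=0\), so \(E\) is continuous.
\item The right derivative is \(\lim_{q\downarrow\Delta_c}E'(q)=0\), which matches the left derivative \(0\). Continuity together with matching one-sided limits of \(E'\) (a mean value theorem argument) gives \(E\in C^1[0,\infty)\), with \(E'(\Delta_c)=0\).
\item Global convexity on \([0,\infty)\) then follows because \(E'\) is continuous and nondecreasing: it is \(0\) on \([0,\Delta_c]\) and increasing afterwards.
\item Nonnegativity and monotonicity follow from \(E'\ge0\) and \(E(0)=0\). Alternatively, they can be read off directly, since the integrand is nonnegative and nondecreasing in \(q\) pointwise.
\end{itemize}

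For \eqref{eq:E_quadratic_asymptotics}, put \(q=\Delta_c(1+\varepsilon)\). Then
\[
  E=\frac{\alpha\Delta_c}{\rho}\bigl(\varepsilon-\log(1+\varepsilon)\bigr),
\]
and the Taylor expansion \(\varepsilon-\log(1+\varepsilon)=\varepsilon^2/2+O(\varepsilon^3)\) yields the claim.

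No step is a real obstacle. The only subtle points are stating the \(C^1\) gluing at the threshold correctly, and noting that \eqref{eq:E_large_q_asymptotics} is exact, which is what gives the linear-minus-logarithmic large-\(q\) behaviour.
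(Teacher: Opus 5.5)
Your proposal is correct and takes essentially the same route as the paper. Both integrate over the active interval \([0,t_q)\) with \(t_q=\rho^{-1}\log(q/\Delta_c)\), differentiate the closed form, check the \(C^1\) matching at \(\Delta_c\), and use \(\varepsilon-\log(1+\varepsilon)=\varepsilon^2/2+O(\varepsilon^3)\) together with \(\alpha\Delta_c=\mu-\beta\); your remark that the large-\(q\) formula is an exact identity is accurate. The only difference is cosmetic: the paper substitutes \(u=qe^{-\rho t}\) to get \(E(q)=\frac{\alpha}{\rho}\int_{\Delta_c}^{q}\frac{u-\Delta_c}{u}\,du\), whereas you integrate directly in \(t\) and spell out the gluing and convexity steps in more detail.
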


\begin{proof}
If \(0\le q\le\Delta_c\), then \(qe^{-\rho t}\le\Delta_c\) for all
\(t\ge0\), so the positive part in \eqref{eq:E_single_def} vanishes.  If
\(q>\Delta_c\), the active interval is
\[
  0\le t<t_q,
  \qquad
  t_q=\frac1\rho\log\frac{q}{\Delta_c}.
\]
Hence
\[
  E(q)
  =\alpha\int_0^{t_q}\bigl(qe^{-\rho t}-\Delta_c\bigr)\,dt
  =\frac{\alpha}{\rho}
    \int_{\Delta_c}^{q}\frac{u-\Delta_c}{u}\,du,
\]
which gives \eqref{eq:E_piecewise_formula}.  Differentiating the explicit
formula gives \eqref{eq:E_derivatives} and \eqref{eq:E_second_derivative}; the
right derivative at \(q=\Delta_c\) is zero, so \(E\) is \(C^1\) across the
threshold.  The stated monotonicity and convexity follow from these derivative
formulae.  Finally, \eqref{eq:E_quadratic_asymptotics} follows from
\(\varepsilon-\log(1+\varepsilon)=\varepsilon^2/2+O(\varepsilon^3)\), and
\eqref{eq:E_large_q_asymptotics} follows from \(\alpha\Delta_c=\mu-\beta\).
\end{proof}

The qualitative information carried by Lemma~\ref{lem:single_shock_exposure} is
the zero buffer and the convex superthreshold branch.  The one-release exposure
is exactly zero below \(\Delta_c\).  Above the threshold it is convex, with
quadratic onset.  Therefore the benefit of splitting a release
into smaller releases is not imposed by an external quadratic penalty.  It is a
consequence of the threshold pressure generated by the compartmental dynamics.

\subsection{Spectral origin of the threshold}
\label{subsec:spectral_threshold_bridge}

Formula \eqref{eq:E_piecewise_formula} becomes a voter-flow quantity because
the threshold is the point at which the linearised mobilisation block crosses
its normalised spectral threshold.

In the mobilisation-free symmetric compartmental state, before the affine
continuation used by the impulse envelope, let \(A\in[0,1]\) be the reservoir
level and let \(C=1-A\) be the baseline level.  Define
\begin{equation}\label{eq:R_A_def}
  R(A):=\frac{\beta C+\delta A}{\mu}
       =\frac{(1-A)\beta+\delta A}{\mu}.
\end{equation}
Here \(R(A)\) is the normalised linear growth factor of the mobilised component
when the reservoir level is frozen at \(A\).  In a higher-dimensional
compartmental model, the analogous object would be a Perron--Frobenius spectral
radius.

\begin{proposition}[Normalised threshold factor]\label{prop:spectral_threshold_bridge}
For \(R(A)\) defined by \eqref{eq:R_A_def},
\begin{equation}\label{eq:g_spectral_identity}
  g(A)=\mu\bigl(R(A)-1\bigr).
\end{equation}
Consequently,
\begin{equation}\label{eq:R_Deltac_identity}
  R(\Delta_c)=1,
\end{equation}
and \(R(A)<1\) for \(A<\Delta_c\), while \(R(A)>1\) for
\(A>\Delta_c\).  With the affine continuation of \(R\) beyond \([0,1]\), the
single-release exposure can equivalently be written as
\begin{equation}\label{eq:E_spectral_excess}
  E(q)=\mu\int_0^\infty [R(qe^{-\rho t})-1]_+\,dt.
\end{equation}
\end{proposition}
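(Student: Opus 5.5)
The plan is a direct algebraic verification followed by substitution into the definition of \(E\).

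\emph{Identity \eqref{eq:g_spectral_identity}.} Expand \(\mu R(A)=\beta(1-A)+\delta A=\beta+(\delta-\beta)A\). Subtracting \(\mu\) gives
\[
  \mu\bigl(R(A)-1\bigr)=(\beta-\mu)+(\delta-\beta)A=g(A),
\]
by \eqref{eq:gA}. This is a polynomial identity in \(A\). It therefore holds for all real \(A\), and hence also for the affine continuation of \(R\) beyond \([0,1]\) invoked in the last claim. This remark fixes how that continuation is read: \(R\) is continued by the same affine formula, consistent with Assumption~\ref{ass:affine_continuation}.

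\emph{Threshold consequences.} Since \(\mu>0\), the sign of \(R(A)-1\) equals the sign of \(g(A)\). By \eqref{eq:g_alpha_form}, \(g(A)=\alpha(A-\Delta_c)\) with \(\alpha=\delta-\beta>0\) under \eqref{eq:shock_sensitive_regime}. Hence \(g(\Delta_c)=0\), which gives \eqref{eq:R_Deltac_identity}. The strict sign statements for \(A<\Delta_c\) and \(A>\Delta_c\) follow from the strict monotonicity of \(g\).

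\emph{Exposure formula \eqref{eq:E_spectral_excess}.} Because \(\mu>0\), positive homogeneity gives \([\mu x]_+=\mu[x]_+\). Substituting \(A=qe^{-\rho t}\) into the identity yields
\[
  [g(qe^{-\rho t})]_+=\mu\,[R(qe^{-\rho t})-1]_+
\]
pointwise in \(t\). Integrating over \([0,\infty)\) and using the definition \eqref{eq:E_single_def} gives \eqref{eq:E_spectral_excess}. Finiteness of the integral is inherited from Lemma~\ref{lem:single_shock_exposure}.

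There is no genuine obstacle. The only point requiring care is that for \(q>1\) the path \(qe^{-\rho t}\) leaves \([0,1]\), where \(R\) was originally defined. This is handled by noting that the identity is affine and holds on all of \(\mathbb R\), so the affine continuation is the natural one.
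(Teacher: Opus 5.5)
Your proof is correct and follows essentially the same route as the paper. Both expand \(\mu R(A)\) using \(C=1-A\) to recover \(g(A)\), read off \(R(\Delta_c)=1\) and the sign statements from strict monotonicity, and substitute into the definition of \(E\); you use monotonicity of \(g\) where the paper uses monotonicity of \(R\), which is equivalent since \(\mu>0\). Your remarks that \([\mu x]_+=\mu[x]_+\) and that the affine identity holds for all real \(A\) make explicit two points the paper leaves implicit.
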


\begin{proof}
Since \(C=1-A\),
\[
  \mu\bigl(R(A)-1\bigr)
  =(1-A)\beta+\delta A-\mu
  =\beta-\mu+(\delta-\beta)A
  =g(A).
\]
Thus \(R(\Delta_c)=1\) because \(g(\Delta_c)=0\).  The function \(R\) is
strictly increasing since \(\delta>\beta\), so the crossing is unique.  Formula
\eqref{eq:E_spectral_excess} follows by substituting
\eqref{eq:g_spectral_identity} into \eqref{eq:E_single_def}.
\end{proof}

In pharmacokinetic, radiotherapy, or reservoir analogues, the relevant
tolerance level is typically specified by the applied problem.  Here it is tied
to a stability boundary of the reduced compartmental dynamics.  The scalar
exposure is therefore used as an upper measure of positive near-baseline
logarithmic amplification.

% ============================================================
% Shock splitting under complete relaxation
% ============================================================

\section{Shock splitting under complete relaxation}
\label{sec:full_relaxation}

We first consider the zero-memory limit of the scalar envelope.  After each
release the reservoir is assumed to recover to zero before the next release is
applied.  This is the full-washout or full-repair idealisation used in dosing,
fractionation, and storage models
\cite{RowlandTozer2011,Bauer2014,WithersThamesPeters1983,Fowler1989,Yeh1985}.
The exposure contributions are independent, and a split \(q_1,\ldots,q_n\) of a
fixed load \(Q\) has total exposure \(\sum_{j=1}^n E(q_j)\).  The allocation is
convex; the distinctive feature is that \(E\) has an exact zero plateau because
\(\Delta_c\) is a stability threshold.

For a fixed number \(n\ge1\) of releases and a total load \(Q>0\), define
\begin{equation}\label{eq:L_n_def}
  \mathcal L_n(Q)
  :=
  \min\left\{
      \sum_{j=1}^n E(q_j):
      q_j\ge0,\quad \sum_{j=1}^n q_j=Q
  \right\}.
\end{equation}
This is the smallest cumulative threshold exposure attainable with exactly
\(n\) completely separated releases.

\begin{theorem}[Complete-relaxation allocation]
\label{thm:complete_relaxation_splitting}
Assume \(0<\beta<\mu<\delta\), and let
\[
  \Delta_c=\frac{\mu-\beta}{\delta-\beta}.
\]
For every \(Q>0\) and every integer \(n\ge1\),
\begin{equation}\label{eq:L_n_equal_split}
  \mathcal L_n(Q)=nE\left(\frac{Q}{n}\right).
\end{equation}
Equivalently,
\begin{equation}\label{eq:L_n_piecewise}
  \mathcal L_n(Q)
  =
  \begin{cases}
    0,
    &0<Q\le n\Delta_c,\\[0.8em]
    \displaystyle
    \frac{\delta-\beta}{\rho}
    \left(
      Q-n\Delta_c
      -n\Delta_c\log\frac{Q}{n\Delta_c}
    \right),
    &Q>n\Delta_c.
  \end{cases}
\end{equation}
The minimisers are as follows.
\begin{enumerate}
  \item If \(0<Q<n\Delta_c\), every decomposition satisfying
  \begin{equation}\label{eq:safe_split_set}
    0\le q_j\le\Delta_c,
    \qquad
    \sum_{j=1}^n q_j=Q,
  \end{equation}
  is optimal, and the minimum exposure is zero.
  \item If \(Q\ge n\Delta_c\), the minimiser is unique and is the equal split
  \begin{equation}\label{eq:equal_split}
    q_1=\cdots=q_n=\frac{Q}{n}.
  \end{equation}
\end{enumerate}
Moreover, \(\mathcal L_n(Q)\) is nonincreasing in \(n\), and
\begin{equation}\label{eq:L_n_monotone}
  \mathcal L_{n+1}(Q)<\mathcal L_n(Q)
  \quad\text{if and only if}\quad
  Q>n\Delta_c.
\end{equation}
\end{theorem}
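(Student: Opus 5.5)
The argument rests entirely on the properties of \(E\) from Lemma~\ref{lem:single_shock_exposure}: \(E\ge0\), \(E\) is convex and nondecreasing on \([0,\infty)\), \(E\equiv0\) on \([0,\Delta_c]\), and \(E\) is strictly convex on \((\Delta_c,\infty)\). The minimum in \eqref{eq:L_n_def} is attained because the feasible set is a compact simplex and \(E\) is continuous.

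\emph{Step 1: lower bound and the case \(Q\le n\Delta_c\).} Jensen's inequality gives \(\sum_j E(q_j)\ge nE(Q/n)\) for every feasible split. The equal split is feasible, so \(\mathcal L_n(Q)=nE(Q/n)\). Substituting \(q=Q/n\) into \eqref{eq:E_piecewise_formula} yields \eqref{eq:L_n_piecewise}, since \(n\Delta_c\log\bigl((Q/n)/\Delta_c\bigr)=n\Delta_c\log\bigl(Q/(n\Delta_c)\bigr)\). For \(Q\le n\Delta_c\) the minimum is therefore zero. Because \(E>0\) exactly on \((\Delta_c,\infty)\), a split has zero exposure if and only if all \(q_j\le\Delta_c\). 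This is item~1, and it also covers \(Q=n\Delta_c\), where the set \eqref{eq:safe_split_set} reduces to the equal split.

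\emph{Step 2: uniqueness for \(Q>n\Delta_c\).} This is the one delicate point, because \(E\) is not strictly convex on its plateau. Suppose \(q\) is a minimiser that is not equal to the equal split, and let \(\bar q=Q/n>\Delta_c\). I would compare \(q\) with the averaged point. Since \(q\) is not the equal split, some \(q_j\neq\bar q\) and some \(q_j>\bar q>\Delta_c\).

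Equality in Jensen's inequality means \(E\) is affine on the convex hull \([\min_j q_j,\max_j q_j]\) of the values, which contains \(\bar q\) in its interior. That interval therefore contains a subinterval of \((\Delta_c,\infty)\) of positive length around \(\bar q\), where \(E''>0\). This contradicts affinity.

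To make the Jensen-equality step rigorous, I would instead use the supporting line at \(\bar q\): \(E(x)\ge E(\bar q)+E'(\bar q)(x-\bar q)\), with strict inequality for \(x\neq\bar q\). Strictness holds on \((\Delta_c,\infty)\) by strict convexity. For \(x\le\Delta_c\), the tangent at \(\bar q\) has positive slope and lies strictly below \(E\) on \(\bigl(\Delta_c,\bar q\bigr)\), hence strictly below \(E\) at \(x\le\Delta_c\) by convexity. Summing the inequality over \(j\) gives strict excess over \(nE(\bar q)\) whenever some \(q_j\neq\bar q\).

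\emph{Step 3: monotonicity in \(n\).} Any \(n\)-split extended by a zero release is an \((n+1)\)-split with the same exposure, since \(E(0)=0\). Hence \(\mathcal L_{n+1}\le\mathcal L_n\).

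For the strict inequality \eqref{eq:L_n_monotone}:
\begin{itemize}
\item If \(Q\le n\Delta_c\), both values are zero.
\item If \(Q>n\Delta_c\), then \(\mathcal L_n(Q)>0\). The \(n+1\)-split obtained by appending \(0\) to the equal \(n\)-split is not equal (its entries differ) and \(Q>n\Delta_c\ge\) no issue arises. If \(Q\ge(n+1)\Delta_c\), Step 2's uniqueness shows \(\mathcal L_{n+1}\) is strictly smaller than this split's value \(\mathcal L_n\). If \(Q<(n+1)\Delta_c\), then \(\mathcal L_{n+1}=0<\mathcal L_n\).
\end{itemize}

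I expect Step 2's handling of the non-strict plateau to be the main obstacle. Everything else is direct substitution into Lemma~\ref{lem:single_shock_exposure}.
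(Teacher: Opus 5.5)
Your proof is correct. Steps~1 and~2 follow the paper's route. In both, Jensen's inequality with the equal split attains the bound, the zero plateau of \(E\) characterises the minimisers when \(Q\le n\Delta_c\), and strict convexity above \(\Delta_c\) gives uniqueness when \(Q>n\Delta_c\). The paper settles uniqueness with a one-line appeal to the equality case of Jensen (``\(E\) is flat only on \([0,\Delta_c]\) and strictly convex above''). Your strict supporting line at \(\bar q=Q/n\), extended to \(x\le\Delta_c\) by the chord argument, is a rigorous version of the same idea.

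Step~3 is where you take a different route. The paper works with the closed form \(nE(Q/n)\) and the scaling inequality \(f(\theta x)\le\theta f(x)\) for convex \(f\) with \(f(0)=0\). It reads off strictness from whether \(E\) is affine on \([0,Q/n]\), which is the same as asking whether \(Q/n>\Delta_c\). You instead embed \(n\)-splits into \((n+1)\)-splits by appending a zero release. This gives \(\mathcal L_{n+1}\le\mathcal L_n\) from \(E(0)=0\) alone, with no convexity needed. You then get strictness by reusing uniqueness of the \((n+1)\)-release minimiser, or the zero plateau when \(Q<(n+1)\Delta_c\).

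Your route avoids having to justify the equality case of the scaling inequality, which the paper asserts without proof, and it builds only on what you have already established. The paper's route is shorter and ties the strictness threshold directly to a property of \(E\) on \([0,Q/n]\).

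Two cosmetic points. The stray phrase ``\(Q>n\Delta_c\ge\) no issue arises'' should be deleted. In the boundary case \(Q=(n+1)\Delta_c\), uniqueness comes from Step~1 rather than Step~2, but there \(\mathcal L_{n+1}=0<\mathcal L_n\) holds directly anyway.
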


\begin{proof}
For fixed \(n\), Jensen's inequality and the convexity of \(E\) give
\[
  \sum_{j=1}^n E(q_j)
  \ge
  nE\left(\frac1n\sum_{j=1}^n q_j\right)
  =nE\left(\frac{Q}{n}\right),
\]
and equality is attained by the equal split.  This proves
\eqref{eq:L_n_equal_split}; substituting the formula from
Lemma~\ref{lem:single_shock_exposure} gives \eqref{eq:L_n_piecewise}.

If \(Q<n\Delta_c\), the objective is zero exactly for decompositions with all
\(q_j\le\Delta_c\), which gives the first minimiser statement.  If
\(Q=n\Delta_c\), zero exposure forces all \(q_j=\Delta_c\).  If
\(Q>n\Delta_c\), the average lies above the threshold.  Since \(E\) is flat only
on \([0,\Delta_c]\) and strictly convex above \(\Delta_c\), equality in
Jensen's inequality is then possible only when all \(q_j\) are equal.

Finally, for any convex function \(f\) with \(f(0)=0\), the standard scaling
inequality \(f(\theta x)\le \theta f(x)\), \(0\le\theta\le1\), implies that
\(n f(Q/n)\) is nonincreasing in \(n\).  Applying this to \(E\) gives
monotonicity of \(\mathcal L_n(Q)\).  The inequality is strict precisely when
\(E\) is not affine on \([0,Q/n]\), which here is equivalent to
\(Q/n>\Delta_c\).
\end{proof}

The finite zero-loss target is the feature that distinguishes this benchmark
from models in which every positive impulse carries positive cost.  In the
present model, splitting is useful not merely because the loss is convex, but
because releases below the stability threshold have no cumulative exposure at
all.

\begin{corollary}[Minimal number of threshold-safe releases]
\label{cor:minimal_safe_number}
Under complete relaxation, a total load \(Q>0\) can be decomposed into \(n\)
zero-exposure releases if and only if
\begin{equation}\label{eq:safe_n_condition}
  n\ge \frac{Q}{\Delta_c}.
\end{equation}
Consequently, the minimal threshold-safe number of releases is
\begin{equation}\label{eq:N_safe_def}
  N_{\rm safe}(Q)
  :=\left\lceil \frac{Q}{\Delta_c}\right\rceil .
\end{equation}
If \(Q/\Delta_c\) is not an integer, minimal safe decompositions need not be
unique.  If \(Q/\Delta_c\) is an integer, the minimal safe decomposition is
unique and consists of equal releases of size \(\Delta_c\).
\end{corollary}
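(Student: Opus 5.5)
The argument reduces to the zero set of the single-release exposure from Lemma~\ref{lem:single_shock_exposure}. By that lemma, \(E(q)=0\) if and only if \(0\le q\le\Delta_c\): the formula is zero on \([0,\Delta_c]\), and for \(q>\Delta_c\) it is strictly positive. Positivity follows because \(E(\Delta_c)=0\) and \(E'(q)=\frac{\delta-\beta}{\rho}(1-\Delta_c/q)>0\) there. Hence, under complete relaxation, a decomposition \(q_1,\dots,q_n\) has zero total exposure \(\sum_j E(q_j)\) exactly when every \(q_j\in[0,\Delta_c]\). Since each term is nonnegative, a zero sum forces every term to vanish. The corollary therefore reduces to the question of when \(Q\) can be written as a sum of \(n\) numbers in \([0,\Delta_c]\).

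For the equivalence \eqref{eq:safe_n_condition}, necessity is the bound \(Q=\sum_j q_j\le n\Delta_c\). For sufficiency, if \(n\Delta_c\ge Q\) the equal split \(q_j=Q/n\le\Delta_c\) is admissible. Alternatively, this is exactly the statement \(\mathcal L_n(Q)=0\iff Q\le n\Delta_c\) read off from \eqref{eq:L_n_piecewise} in Theorem~\ref{thm:complete_relaxation_splitting}, combined with the zero-set characterisation above. The smallest integer \(n\) with \(n\ge Q/\Delta_c\) is \(\lceil Q/\Delta_c\rceil\), which gives \eqref{eq:N_safe_def}. Note that \(Q>0\) ensures \(N_{\rm safe}\ge1\).

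For the uniqueness statements, set \(N=N_{\rm safe}(Q)\).

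\emph{Integer case.} If \(Q/\Delta_c=N\) is an integer, then \(\sum_j q_j=N\Delta_c\) with each \(q_j\le\Delta_c\) forces \(\sum_j(\Delta_c-q_j)=0\), a sum of nonnegative terms. Hence \(q_j=\Delta_c\) for all \(j\), which is the equality case already noted in the proof of Theorem~\ref{thm:complete_relaxation_splitting}.

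\emph{Non-integer case.} If \(Q/\Delta_c\) is not an integer, then \(Q<N\Delta_c\), and non-uniqueness needs explicit witnesses. The first is the equal split \(Q/N\). The second fills \(N-1\) releases to \(\Delta_c\) and puts the remainder \(Q-(N-1)\Delta_c\in(0,\Delta_c)\) in the last one. These two differ whenever \(N\ge2\). When \(N=1\), i.e.\ \(Q<\Delta_c\), the single release is forced, and the phrase ``need not be unique'' should be read as covering \(N\ge2\); I would state this caveat explicitly. For \(N\ge2\), permuting the witnesses gives further distinct safe decompositions, so ``need not be unique'' is established.

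The only step requiring care is the \(N=1\) edge case in the non-uniqueness claim. Everything else is immediate from the zero plateau and the strict positivity above \(\Delta_c\).
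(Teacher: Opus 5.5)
Your proof is correct and follows essentially the paper's route: the paper simply cites Theorem~\ref{thm:complete_relaxation_splitting}, namely that zero exposure with $n$ releases is possible iff $Q\le n\Delta_c$, together with its minimiser description, and your argument derives exactly these facts directly from the zero set $E^{-1}(0)=[0,\Delta_c]$ given by Lemma~\ref{lem:single_shock_exposure}. Your explicit non-uniqueness witnesses are a correct sharpening of ``need not be unique'', as is your caveat that for $N_{\rm safe}=1$ (that is, $Q<\Delta_c$) the single release $q_1=Q$ is forced.
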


\begin{proof}
By Theorem~\ref{thm:complete_relaxation_splitting}, zero exposure with \(n\)
releases is possible exactly when \(Q\le n\Delta_c\).  The uniqueness statements
follow from the minimiser description in the theorem.
\end{proof}

\subsection{Fixed overhead in the complete-relaxation limit}
\label{subsec:administrative_overhead_full_relaxation}

The previous result ignores the cost of using more stages.  We now add a fixed
overhead \(K>0\) per release.  This is the same structural device as a setup
cost in dynamic lot-sizing and related release-scheduling models
\cite{WagnerWhitin1958}: a fixed per-stage charge is balanced against the
benefit of splitting the load more finely.  The distinction is the
stage-dependent term.  Here it is the threshold exposure \(\mathcal L_n(Q)\),
not an inventory holding, shortage, or production cost.

For complete relaxation, the cost of using \(n\) releases is
\begin{equation}\label{eq:J_n_full_relaxation_def}
  \mathcal J_n^{\infty}(Q;K)
  :=nK+\mathcal L_n(Q),
  \qquad n=1,2,\ldots .
\end{equation}
The superscript \(\infty\) indicates complete recovery between releases.  The
outer problem is the integer minimisation
\begin{equation}\label{eq:J_full_relaxation_outer}
  \mathcal J^{\infty,*}(Q;K)
  :=\min_{n\ge1}\mathcal J_n^{\infty}(Q;K).
\end{equation}
This integer trade-off gives an explicit boundary between cost-optimal safety
and cost-optimal residual exposure.

Use the dimensionless variables
\begin{equation}\label{eq:r_k_defs}
  r:=\frac{Q}{\Delta_c},
  \qquad
  k:=\frac{K\rho}{\mu-\beta},
  \qquad
  N_{\rm safe}:=\lceil r\rceil .
\end{equation}
Thus \(r\) is the load measured in threshold units, and \(k\) is the overhead
measured in the one-shock exposure scale \((\mu-\beta)/\rho\).

\begin{proposition}[Overhead--exposure trade-off]
\label{prop:cost_exposure_tradeoff}
Every minimiser of \eqref{eq:J_full_relaxation_outer} belongs to the finite set
\begin{equation}\label{eq:n_candidate_finite_set}
  \{1,2,\ldots,N_{\rm safe}\}.
\end{equation}
For \(1\le n<N_{\rm safe}\), the dimensionless objective is
\begin{equation}\label{eq:Jhat_unsafe}
  \widehat{\mathcal J}_n
  :=\frac{\rho}{\mu-\beta}\mathcal J_n^{\infty}(Q;K)
  =nk+r-n-n\log\frac{r}{n},
\end{equation}
whereas the threshold-safe candidate has value
\begin{equation}\label{eq:Jhat_safe}
  \widehat{\mathcal J}_{N_{\rm safe}}
  =N_{\rm safe}k.
\end{equation}
Consequently, the threshold-safe number of releases is optimal if and only if
\begin{equation}\label{eq:k_safe_condition}
  k\le k_{\rm safe}(r),
\end{equation}
where, for \(N_{\rm safe}\ge2\),
\begin{equation}\label{eq:k_safe_def}
  k_{\rm safe}(r)
  :=
  \min_{1\le m<N_{\rm safe}}
  \frac{r-m-m\log(r/m)}{N_{\rm safe}-m},
\end{equation}
and for \(N_{\rm safe}=1\) we set \(k_{\rm safe}(r):=+\infty\).  At equality in
\eqref{eq:k_safe_condition}, the safe candidate and at least one unsafe
candidate are both optimal.

If the integer variable is relaxed to a continuous variable on the unsafe
interval \(0<n<r\), then
\begin{equation}\label{eq:Jhat_cont_def}
  \widehat{\mathcal J}(n)=nk+r-n-n\log\frac{r}{n}
\end{equation}
is strictly convex and has the unique stationary point
\begin{equation}\label{eq:n_cont_star}
  n_{\rm cont}^*=r e^{-k}
  =\frac{Q}{\Delta_c}
  \exp\left(-\frac{K\rho}{\mu-\beta}\right).
\end{equation}
\end{proposition}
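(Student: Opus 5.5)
The proof combines Theorem~\ref{thm:complete_relaxation_splitting} with a direct rescaling. The plan has four steps.

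\textbf{Step 1: restrict the candidate set.} For $n\ge N_{\rm safe}$ we have $Q\le n\Delta_c$, so $\mathcal L_n(Q)=0$ by \eqref{eq:L_n_piecewise}. Hence $\mathcal J_n^\infty=nK$, which is strictly increasing in $n$ because $K>0$. Any $n>N_{\rm safe}$ is therefore strictly worse than $N_{\rm safe}$, and every minimiser lies in \eqref{eq:n_candidate_finite_set}. Existence of a minimiser then follows because the set is finite.

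\textbf{Step 2: compute the dimensionless values.} For $n<N_{\rm safe}$ we have $n<r$, i.e.\ $Q>n\Delta_c$, so the second branch of \eqref{eq:L_n_piecewise} applies. Multiply by $\rho/(\mu-\beta)$ and use $(\delta-\beta)\Delta_c=\mu-\beta$, which gives $(\delta-\beta)/(\mu-\beta)=1/\Delta_c$. Then
\[
\frac{\rho}{\mu-\beta}\,\frac{\delta-\beta}{\rho}\bigl(Q-n\Delta_c-n\Delta_c\log\tfrac{Q}{n\Delta_c}\bigr)=r-n-n\log\frac rn ,
\]
and $\rho nK/(\mu-\beta)=nk$. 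This yields \eqref{eq:Jhat_unsafe}. The safe value \eqref{eq:Jhat_safe} follows from $\mathcal L_{N_{\rm safe}}=0$.

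\textbf{Step 3: the threshold condition.} The safe candidate is optimal exactly when $N_{\rm safe}k\le mk+r-m-m\log(r/m)$ for every $1\le m<N_{\rm safe}$. Rearranging with $N_{\rm safe}-m>0$ gives $k\le (r-m-m\log(r/m))/(N_{\rm safe}-m)$ for each such $m$. Taking the minimum over $m$ gives \eqref{eq:k_safe_condition}. When $N_{\rm safe}=1$ the candidate set is $\{1\}$, so the condition is vacuous and $k_{\rm safe}:=+\infty$ is consistent.

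At equality, the minimum in \eqref{eq:k_safe_def} is attained by some $m$, and that $m$ ties with the safe value. If the inequality is strict, the safe candidate is strictly better than every unsafe one. If $k>k_{\rm safe}$, some $m$ beats it. For each $m<r$ the numerator is positive, since $x-1-\log x>0$ at $x=r/m>1$, so $k_{\rm safe}>0$; this is useful context but is not needed for the proof.

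\textbf{Step 4: the continuous relaxation.} On $0<n<r$, differentiate:
\[
\widehat{\mathcal J}'(n)=k-1-\log(r/n)+1=k-\log(r/n), \qquad \widehat{\mathcal J}''(n)=1/n>0 .
\]
So $\widehat{\mathcal J}$ is strictly convex. Its derivative vanishes exactly when $\log(r/n)=k$, i.e.\ at $n=re^{-k}$. This point lies in $(0,r)$ because $k>0$. Uniqueness follows from strict monotonicity of the derivative.

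I do not expect a real obstacle. The only care needed is the bookkeeping of the case $n<N_{\rm safe}\iff n<r$, which holds for integer $n$ because $N_{\rm safe}=\lceil r\rceil$.
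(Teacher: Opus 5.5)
Your proposal is correct and follows the paper's proof essentially step for step. You restrict to $n\le N_{\rm safe}$ because the exposure vanishes and $nK$ increases, rescale the second branch of \eqref{eq:L_n_piecewise} using $(\delta-\beta)\Delta_c=\mu-\beta$, compare $N_{\rm safe}k$ with each unsafe candidate, and differentiate the relaxed objective. Your extra remarks are correct details the paper leaves implicit: existence of a minimiser via finiteness, $n<N_{\rm safe}\iff n<r$ for integers, and $re^{-k}\in(0,r)$ since $k>0$.
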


\begin{proof}
For every \(n\ge N_{\rm safe}\), the exposure term is zero, so
\(\mathcal J_n^{\infty}(Q;K)=nK\).  Among these safe candidates the smallest
value is attained at \(n=N_{\rm safe}\), proving
\eqref{eq:n_candidate_finite_set}.

For \(1\le n<N_{\rm safe}\), one has \(n<r\).  Using
\eqref{eq:L_n_piecewise} and \((\mu-\beta)=(\delta-\beta)\Delta_c\),
\[
  \frac{\rho}{\mu-\beta}\mathcal L_n(Q)
  =r-n-n\log\frac{r}{n}.
\]
Adding \(nk\) gives \eqref{eq:Jhat_unsafe}, while
\eqref{eq:Jhat_safe} follows from zero exposure at \(N_{\rm safe}\).

The safe candidate is optimal exactly when
\[
  N_{\rm safe}k
  \le
  mk+r-m-m\log\frac{r}{m}
  \qquad (1\le m<N_{\rm safe}),
\]
which is equivalent to \eqref{eq:k_safe_condition}--\eqref{eq:k_safe_def}.
Finally,
\[
  \widehat{\mathcal J}'(n)=k-\log\frac{r}{n},
  \qquad
  \widehat{\mathcal J}''(n)=\frac1n>0,
\]
so the relaxed unsafe objective is strictly convex and its stationary point is
\eqref{eq:n_cont_star}.
\end{proof}

The interpretation is direct.  Without overhead, the best complete-relaxation
strategy keeps adding releases until exposure is eliminated.  With overhead,
full threshold safety may be feasible but not cost-optimal.  The curve
\(k=k_{\rm safe}(r)\) separates the two regimes: below it, the cost-optimal
solution is threshold-safe; above it, the cost-optimal solution uses fewer
releases and accepts positive residual threshold exposure.  The relaxed formula
\(n_{\rm cont}^*=r e^{-k}\) is not an integer solution formula.  It only records
the expected setup-cost effect: higher overhead pulls the optimum away from the
threshold-safe count \(r=Q/\Delta_c\).

The next section removes the complete-relaxation assumption.  Then releases are
no longer independent, and the equal split ceases to be the natural peak-safety
benchmark.

% ============================================================
% Finite recovery: peak safety and capacity
% ============================================================

\section{Finite recovery and peak-safety capacity}
\label{sec:finite_recovery_bellman}

The complete-relaxation benchmark treats consecutive releases as independent.
We now keep the scalar envelope but allow residual reservoir memory between
stages.  The same linear carry-over equation appears in multiple-dose
pharmacokinetics, incomplete-repair fractionation, and leaky-storage models
\cite{RowlandTozer2011,Bauer2014,WithersThamesPeters1983,Fowler1989,Yeh1985}.
Here it gives the peak-safety capacity benchmark associated with the
stability-derived threshold.

The objective in this section is also deliberately different from the cumulative
exposure objective of Section~\ref{sec:full_relaxation}.  We minimise the peak
reservoir level.  This is the appropriate safety benchmark because the threshold
condition is purely pointwise: if every post-release level is at most
\(\Delta_c\), then the reservoir remains below \(\Delta_c\) between releases as
well, since it only decays there.  Conversely, if a post-release level exceeds
\(\Delta_c\), the threshold is crossed immediately.  Thus the peak problem gives
an exact test for threshold safety in the scalar envelope, while the cumulative
finite-recovery exposure problem would be a different convex-control problem.

\subsection{Fixed inter-release time}
\label{subsec:finite_recovery_peak_problem}

Assume first that consecutive releases are separated by a fixed time interval
\(\tau>0\).  During this interval the reservoir decays by the factor
\begin{equation}\label{eq:finite_lambda_recall}
  \lambda=e^{-\rho\tau}\in(0,1).
\end{equation}
Starting from an empty reservoir, the post-release levels satisfy
\begin{equation}\label{eq:A_finite_recovery_levels_empty}
  A_1=q_1,
  \qquad
  A_k=\lambda A_{k-1}+q_k,
  \quad k=2,\ldots,n,
\end{equation}
where
\begin{equation}\label{eq:finite_recovery_budget}
  q_k\ge0,
  \qquad
  \sum_{k=1}^n q_k=Q.
\end{equation}
Define
\begin{equation}\label{eq:c_m_lambda_def}
  c_n(\lambda):=1+(n-1)(1-\lambda),
  \qquad n=1,2,\ldots .
\end{equation}
The peak-safety value is
\begin{equation}\label{eq:H_sequence_equivalent}
  H_n(0,Q)
  :=
  \min_{\substack{q_k\ge0\\ \sum_{k=1}^n q_k=Q}}
  \max_{1\le k\le n} A_k .
\end{equation}
The factor \(c_n(\lambda)\) is the effective capacity multiplier: one unit of
allowed peak can be filled initially, and each later stage can replenish only
the fraction \(1-\lambda\) that has dissipated since the previous release.

\begin{theorem}[Finite-recovery peak capacity]
\label{thm:finite_recovery_bellman_solution}
For every \(n\ge1\) and \(Q\ge0\),
\begin{equation}\label{eq:H_empty_closed_form}
  H_n(0,Q)
  =
  \frac{Q}{1+(n-1)(1-\lambda)}
  =\frac{Q}{c_n(\lambda)}.
\end{equation}
If \(Q>0\), the optimal post-release levels are uniquely determined and are all
equal:
\begin{equation}\label{eq:A_constant_optimal_profile}
  A_1^*=A_2^*=\cdots=A_n^*=H_n(0,Q).
\end{equation}
The corresponding release profile is
\begin{equation}\label{eq:q_front_loaded_profile}
  q_1^*=H_n(0,Q),
  \qquad
  q_2^*=\cdots=q_n^*=(1-\lambda)H_n(0,Q),
\end{equation}
and hence
\begin{equation}\label{eq:front_loading_ratio}
  \frac{q_1^*}{q_k^*}=\frac{1}{1-\lambda},
  \qquad k=2,\ldots,n.
\end{equation}
\end{theorem}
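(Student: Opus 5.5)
The plan is a lower bound that holds for every feasible schedule, matched by an explicit schedule that attains it, followed by an analysis of when the bound is tight to get uniqueness. Throughout, write \(M:=\max_{1\le k\le n}A_k\) for a feasible release sequence satisfying \eqref{eq:finite_recovery_budget}.

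\emph{Lower bound.} Invert the recursion \eqref{eq:A_finite_recovery_levels_empty}: \(q_1=A_1\) and \(q_k=A_k-\lambda A_{k-1}\) for \(k\ge2\). Summing gives the telescoped budget identity
\[
  Q=\sum_{k=1}^n q_k
   =A_n+(1-\lambda)\sum_{k=1}^{n-1}A_k .
\]
Each of the \(n\) post-release levels on the right is at most \(M\), and the coefficients \(1\) and \(1-\lambda\) are nonnegative. Hence \(Q\le M\bigl(1+(n-1)(1-\lambda)\bigr)=c_n(\lambda)M\), so \(M\ge Q/c_n(\lambda)\) for every feasible schedule. Nonnegativity of the \(q_k\) is not needed for this direction.

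\emph{Attainment.} Set \(H:=Q/c_n(\lambda)\) and take the profile \eqref{eq:q_front_loaded_profile}: \(q_1^*=H\) and \(q_k^*=(1-\lambda)H\) for \(k\ge2\). These releases are nonnegative and sum to \(H+(n-1)(1-\lambda)H=Q\). By induction on \(k\), every post-release level satisfies \(A_k=\lambda H+(1-\lambda)H=H\), so the peak equals \(H\). This proves \eqref{eq:H_empty_closed_form}. The ratio \eqref{eq:front_loading_ratio} follows by direct division, since \(\lambda<1\).

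\emph{Uniqueness.} This is the only step needing care. Suppose \(Q>0\) and a schedule has peak exactly \(H\). Then equality holds in the chain above, so \(\sum_k w_k(H-A_k)=0\) with weights \(w_n=1\) and \(w_k=1-\lambda>0\) for \(k<n\). Because \(\lambda\in(0,1)\), every weight is strictly positive and every term \(H-A_k\) is nonnegative. Therefore each \(A_k=H\), which is \eqref{eq:A_constant_optimal_profile}. The releases are then forced by inverting the recursion, \(q_k=A_k-\lambda A_{k-1}\), which recovers \eqref{eq:q_front_loaded_profile}.

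The case \(n=1\) is trivial, since then \(c_1=1\) and \(q_1=Q\). The case \(Q=0\) gives \(H=0\), where the formula holds but the uniqueness claim is not asserted, consistent with the statement. The main obstacle is simply spotting the telescoped budget identity; once it is written down, the rest reduces to the strict positivity of \(1-\lambda\).
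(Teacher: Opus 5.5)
Your proof is correct and follows the paper's argument essentially step for step. You invert the recursion and sum to the capacity identity \(Q=A_n+(1-\lambda)\sum_{k=1}^{n-1}A_k\), which gives \(M\ge Q/c_n(\lambda)\); you attain the bound with the constant-level profile; and you get uniqueness from equality in the bound, where your explicit strictly-positive-weights argument spells out the paper's ``equality forces equality in every estimate''. One cosmetic point: dividing to obtain \(q_1^*/q_k^*=1/(1-\lambda)\) needs \(Q>0\) (so that \(q_k^*\neq0\)), not only \(\lambda<1\).
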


\begin{proof}
For any feasible sequence, the releases can be reconstructed from the
post-release levels by
\[
  q_1=A_1,
  \qquad
  q_k=A_k-\lambda A_{k-1},\quad k\ge2.
\]
Summing gives the capacity identity
\begin{equation}\label{eq:Q_capacity_identity}
  Q=A_n+(1-\lambda)\sum_{k=1}^{n-1}A_k .
\end{equation}
If all post-release levels are bounded by \(M\), then
\[
  Q\le M+(n-1)(1-\lambda)M=c_n(\lambda)M.
\]
Thus every feasible peak satisfies \(M\ge Q/c_n(\lambda)\).

The bound is attained by taking all post-release levels equal to
\(H:=Q/c_n(\lambda)\).  Then \(q_1=H\) and
\(q_k=(1-\lambda)H\) for \(k\ge2\), and these releases sum to \(Q\) by
\eqref{eq:Q_capacity_identity}.  If \(Q>0\), equality in the capacity bound
forces equality in every estimate \(A_k\le M\), so all optimal post-release
levels must be equal to \(H\).  The release profile then follows uniquely from
the reconstruction formula.
\end{proof}

\begin{remark}[State-dependent form]
The same calculation has a dynamic programming representation.  If \(a\ge0\)
is the level immediately before the first of \(m\) remaining releases, define
\begin{equation}\label{eq:H_bellman_state_value}
  H_m(a,Q)
  :=
  \min_{\substack{q_k\ge0\\ \sum_{k=1}^m q_k=Q}}
  \max_{1\le k\le m} A_k,
  \qquad
  A_1=a+q_1,
  \quad
  A_k=\lambda A_{k-1}+q_k .
\end{equation}
Then the same capacity identity gives
\begin{equation}\label{eq:H_closed_form}
  H_m(a,Q)
  =
  \max\left\{
      a,
      \frac{a+Q}{1+(m-1)(1-\lambda)}
  \right\}.
\end{equation}
Equivalently, this value satisfies the recursion
\begin{equation}\label{eq:H_bellman_reader_friendly}
  H_m(a,Q)
  =
  \min_{0\le q\le Q}
  \max\left\{
      a+q,
      H_{m-1}\bigl(\lambda(a+q),Q-q\bigr)
  \right\},
  \qquad H_1(a,Q)=a+Q.
\end{equation}
This recursion is the usual deterministic minimax representation of the same
capacity identity.
\end{remark}

\begin{remark}[Loading/maintenance form]
The profile \eqref{eq:q_front_loaded_profile} is the loading-plus-maintenance
structure of a linear carry-over model: the first release fills the empty
reservoir to the target level, and every later release replaces the dissipated
fraction \cite{RowlandTozer2011,Bauer2014}.  In the present paper the target
peak is compared with the stability-derived threshold \(\Delta_c\), and the
scalar envelope is justified by Proposition~\ref{prop:conservative_envelope}.
\end{remark}

\subsection{Threshold safety under finite recovery}
\label{subsec:finite_recovery_safety}

Starting from an empty reservoir, threshold safety is equivalent to
\(H_n(0,Q)\le\Delta_c\).  The previous theorem therefore gives the safe-capacity
criterion immediately.

\begin{corollary}[Finite-recovery threshold safety]
\label{cor:finite_recovery_safety}
Let \(r:=Q/\Delta_c\).  With \(n\) releases separated by a fixed recovery
interval \(\tau\) and with \(\lambda=e^{-\rho\tau}\), a total load \(Q\) is
threshold-safe from the empty reservoir if and only if
\begin{equation}\label{eq:finite_recovery_safe_condition}
  r\le c_n(\lambda)=1+(n-1)(1-\lambda).
\end{equation}
Equivalently,
\begin{equation}\label{eq:Qmax_fixed_lambda}
  Q\le Q_{\max}^{\rm safe}(n,\lambda)
  :=\Delta_c\bigl[1+(n-1)(1-\lambda)\bigr].
\end{equation}
The minimal number of releases required for threshold safety is
\begin{equation}\label{eq:N_safe_lambda}
  N_{\rm safe}^{\lambda}(Q)
  =
  1+
  \left\lceil
      \frac{(r-1)_+}{1-\lambda}
  \right\rceil,
  \qquad
  (x)_+:=\max\{x,0\}.
\end{equation}
\end{corollary}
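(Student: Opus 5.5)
The plan is to read everything off Theorem~\ref{thm:finite_recovery_bellman_solution}, together with the observation made at the start of this section that threshold safety of the scalar envelope is a pointwise condition on post-release levels. Since the reservoir only decays between releases, the path stays at or below \(\Delta_c\) for all times if and only if \(\max_k A_k\le\Delta_c\). Hence a load \(Q\) is threshold-safe with \(n\) releases if and only if some feasible schedule has peak at most \(\Delta_c\), that is, if and only if \(H_n(0,Q)\le\Delta_c\). The minimum in \eqref{eq:H_sequence_equivalent} is attained: Theorem~\ref{thm:finite_recovery_bellman_solution} exhibits the minimiser explicitly, and for \(Q=0\) the claim is trivial.

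Substituting \eqref{eq:H_empty_closed_form}, the condition \(H_n(0,Q)\le\Delta_c\) becomes \(Q/c_n(\lambda)\le\Delta_c\). Because \(c_n(\lambda)\ge1>0\), this is equivalent to \(r\le c_n(\lambda)\), which is \eqref{eq:finite_recovery_safe_condition}. Multiplying by \(\Delta_c\) gives \eqref{eq:Qmax_fixed_lambda}.

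For the minimal count, note that \(c_n(\lambda)\) is strictly increasing in \(n\), since \(1-\lambda>0\). The safe set of \(n\) is therefore an up-set of the positive integers, and the minimal element is the least \(n\ge1\) satisfying \((n-1)(1-\lambda)\ge r-1\). I would split into two cases.
\begin{itemize}
  \item If \(r\le1\), then \(n=1\) already works, and the formula gives \(1+\lceil 0\rceil=1\).
  \item If \(r>1\), the condition is \(n-1\ge (r-1)/(1-\lambda)\). The least integer \(n-1\) satisfying it is \(\lceil (r-1)/(1-\lambda)\rceil\), which is at least \(1\), so the corresponding \(n\) is an admissible count \(\ge2\).
\end{itemize}
Combining the two cases via \((r-1)_+\) yields \eqref{eq:N_safe_lambda}.

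None of these steps is a real obstacle. The only point needing care is the equivalence between continuous-time safety and the discrete peak condition, including the instant of each release. I will handle it by recalling that on each inter-release interval \(A(t)=A_k e^{-\rho(t-t_k)}\le A_k\), and that after the last release the path decays monotonically. I will also note explicitly that ``safe'' here means the scalar envelope stays \(\le\Delta_c\); by Proposition~\ref{prop:conservative_envelope} this is then sufficient for the full system.
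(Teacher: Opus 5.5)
Your proposal is correct and follows the paper's own proof. You reduce safety to \(H_n(0,Q)\le\Delta_c\) via the pointwise peak observation, substitute \(H_n(0,Q)=Q/c_n(\lambda)\) from Theorem~\ref{thm:finite_recovery_bellman_solution}, and solve \(r\le 1+(n-1)(1-\lambda)\) for the least integer \(n\ge1\); your case split on \(r\le1\) versus \(r>1\) and your use of the monotonicity of \(c_n(\lambda)\) in \(n\) simply spell out the step the paper compresses into ``solving this inequality.''
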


\begin{proof}
The condition \(H_n(0,Q)\le\Delta_c\), together with
\eqref{eq:H_empty_closed_form}, is exactly
\(Q/\Delta_c\le c_n(\lambda)\).  Solving this inequality for the smallest
integer \(n\ge1\) gives \eqref{eq:N_safe_lambda}.
\end{proof}

When \(\lambda=0\), the reservoir fully recovers between releases and
\eqref{eq:q_front_loaded_profile} reduces to the equal split \(q_k=Q/n\).  When
\(\lambda\) is close to one, little recovery occurs between stages, and the
later releases can only be small maintenance increments.  Thus finite recovery
changes the natural safety allocation from equal splitting to front-loaded
constant-peak dispatch.

\subsection{A fixed crisis horizon}
\label{subsec:fixed_crisis_horizon}

The preceding formulas fix the inter-release interval \(\tau\).  In that
convention, increasing \(n\) also lengthens the total absorption process.  A
different question arises when the whole load must be processed within a fixed
horizon \(T>0\).  Then more releases mean shorter intervals and less recovery
between neighbouring stages.

For \(n\ge2\) equally spaced releases over a horizon \(T\), set
\begin{equation}\label{eq:lambda_n_fixed_T_again}
  \lambda_n=e^{-\rho T/(n-1)}.
\end{equation}
Writing \(x=\rho T\), define
\begin{equation}\label{eq:c_n_fixed_T_def}
  B_n(x)
  :=1+(n-1)\left(1-e^{-x/(n-1)}\right),
  \qquad n\ge2,
\end{equation}
with \(B_1(x):=1\).  Then
\begin{equation}\label{eq:H_fixed_T}
  H_n^{T}(0,Q)=\frac{Q}{B_n(\rho T)},
  \qquad n\ge2,
\end{equation}
and the threshold-safe condition is
\begin{equation}\label{eq:fixed_T_safe_condition}
  \frac{Q}{\Delta_c}\le B_n(\rho T).
\end{equation}

\begin{proposition}[Horizon-limited safe capacity]
\label{prop:horizon_limited_capacity}
Let \(x:=\rho T>0\).  The functions \(B_n(x)\) are increasing in \(n\) and
\begin{equation}\label{eq:c_n_fixed_T_limit}
  \lim_{n\to\infty}B_n(x)=1+x.
\end{equation}
Moreover, among all \(n\)-release schedules whose inter-release intervals fit
inside the same horizon \(T\), equal spacing maximises the safe capacity.  Over
all finite schedules, the supremal threshold-safe load is
\begin{equation}\label{eq:Qsup_safe_T}
  Q_{\sup}^{\rm safe}(T)=\Delta_c(1+\rho T).
\end{equation}
Consequently, if
\begin{equation}\label{eq:fixed_T_infeasible_condition}
  Q>\Delta_c(1+\rho T),
\end{equation}
then threshold safety is impossible within the horizon \(T\) in the scalar
envelope.  If \(Q<\Delta_c(1+\rho T)\), sufficiently fine equal spacing is
threshold-safe.  At equality the value is only supremal: for every finite
\(n\ge2\), \(B_n(\rho T)<1+\rho T\).
\end{proposition}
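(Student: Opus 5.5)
The argument rests on the function \(\phi(s):=s(1-e^{-x/s})\) for \(s>0\), because \(B_n(x)=1+\phi(n-1)\). Three properties of \(\phi\) carry the whole proof.

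First, write \(\phi(s)=x\,h(x/s)\) with \(h(u):=(1-e^{-u})/u\). The function \(h\) is strictly decreasing on \((0,\infty)\): its numerator satisfies \(e^{-u}(1+u)-1<0\) for \(u>0\). Hence \(\phi\) is strictly increasing in \(s\). Since \(h(u)\to1\) as \(u\downarrow0\) and \(h(u)<1\) for \(u>0\), we get \(\phi(s)\uparrow x\) as \(s\to\infty\), with \(\phi(s)<x\) for every finite \(s\). This gives that \(B_n\) is increasing in \(n\), that \(B_n(x)\to1+x\), and that \(B_n(x)<1+x\) for every finite \(n\ge2\). The comparison \(B_1=1<B_2\) is immediate.

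Second, the optimality of equal spacing. Take an \(n\)-release schedule with gaps \(\tau_1,\dots,\tau_{n-1}\ge0\) and \(\sum\tau_i\le T\), and set \(\lambda_i=e^{-\rho\tau_i}\). Repeating the telescoping argument of Theorem~\ref{thm:finite_recovery_bellman_solution} with \(q_k=A_k-\lambda_{k-1}A_{k-1}\) yields the capacity identity
\[
Q=A_n+\sum_{k=1}^{n-1}(1-\lambda_k)A_k .
\]
Consequently the optimal peak is \(Q/\bigl(1+\sum_k(1-e^{-\rho\tau_k})\bigr)\). The lower bound comes from \(A_k\le M\). Attainment comes from taking all post-release levels equal, which gives nonnegative releases \(q_k=(1-\lambda_{k-1})M\).

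So the safe capacity is \(\Delta_c\bigl(1+\sum_k(1-e^{-\rho\tau_k})\bigr)\). The map \(\tau\mapsto1-e^{-\rho\tau}\) is concave and increasing. By Jensen's inequality together with monotonicity, the sum is therefore at most \((n-1)(1-e^{-\rho T/(n-1)})\), with equality at \(\tau_k=T/(n-1)\). Hence equal spacing maximises the capacity, and that maximal capacity is \(\Delta_c B_n(\rho T)\).

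Third, the supremum and the two consequences. Over all finite schedules, the safe capacity is \(\sup_n \Delta_c B_n(\rho T)=\Delta_c(1+\rho T)\), using the first step and including the trivial case \(n=1\). Two consequences follow:
\begin{itemize}
\item If \(Q>\Delta_c(1+\rho T)\), then \(Q/\Delta_c\) exceeds every \(B_n\). No schedule satisfies \eqref{eq:fixed_T_safe_condition} or its unequal-spacing analogue, so safety is impossible.
\item If \(Q<\Delta_c(1+\rho T)\), the convergence \(B_n\to1+\rho T\) gives an \(n\) with \(B_n(\rho T)\ge Q/\Delta_c\). The equally spaced constant-peak schedule of Theorem~\ref{thm:finite_recovery_bellman_solution} is then safe.
\end{itemize}
At equality, the strict inequality \(B_n<1+\rho T\) from the first step shows that the supremum is not attained.

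The main obstacle is the unequal-spacing claim, specifically extending the capacity identity to variable retention factors. Once the identity \(Q=A_n+\sum(1-\lambda_k)A_k\) is in hand, the rest is concavity.
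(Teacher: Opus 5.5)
Your proof is correct and follows essentially the paper's argument. You use the same ingredients: monotonicity and the limit of $B_n$ via $e^{y}>1+y$ and $1-e^{-y}<y$, the capacity identity with gap-dependent retention factors, and concavity of $\tau\mapsto1-e^{-\rho\tau}$ for equal spacing. The one variation is that the paper bounds an arbitrary schedule's capacity factor directly by $1-e^{-\rho\tau_j}\le\rho\tau_j$, whereas you go through the per-$n$ equal-spacing optimum and then $B_n<1+\rho T$; you also spell out the attainment of the bound and the case $\sum_j\tau_j<T$, which the paper leaves implicit.
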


\begin{proof}
Write \(m=n-1\) and
\[
  f(m):=m\left(1-e^{-x/m}\right),
  \qquad m>0.
\]
Then \(B_n(x)=1+f(n-1)\).  Since
\[
  f'(m)=1-e^{-x/m}\left(1+\frac{x}{m}\right)>0,
\]
by \(e^y>1+y\) for \(y>0\), the sequence \(B_n(x)\) is increasing.  The limit
follows from \(1-e^{-x/m}=x/m+O(m^{-2})\).  This expansion also gives
\(B_n(x)<1+x\) for every finite \(n\ge2\).

For unequal spacings, let \(\tau_1,\ldots,\tau_{n-1}\ge0\) be the
inter-release intervals, with \(\sum_j\tau_j\le T\).  The capacity identity
becomes
\[
  Q\le M\left[
  1+\sum_{j=1}^{n-1}\bigl(1-e^{-\rho\tau_j}\bigr)
  \right]
\]
for any schedule whose peak is at most \(M\).  For fixed \(n\) and
\(\sum_j\tau_j=T\), concavity of \(\tau\mapsto1-e^{-\rho\tau}\) gives the
maximum at equal spacing.  For arbitrary \(n\) and arbitrary spacings within the
horizon,
\[
  1+\sum_{j=1}^{n-1}\bigl(1-e^{-\rho\tau_j}\bigr)
  \le
  1+\rho\sum_{j=1}^{n-1}\tau_j
  \le 1+\rho T.
\]
Taking \(M=\Delta_c\) gives the upper bound \(\Delta_c(1+\rho T)\).  Conversely,
because \(B_n(x)\uparrow 1+x\), every load strictly below this bound is safe for
all sufficiently large equally spaced \(n\).  Since \(1-e^{-y}<y\) for \(y>0\),
no finite schedule attains the bound exactly when \(T>0\).  This proves the
supremal capacity and the stated feasibility alternatives.
\end{proof}

With unlimited calendar time, additional releases can eventually make any fixed
load safe in the scalar envelope.  With a fixed horizon, the total amount of
available recovery is bounded by \(\rho T\).  The scalar envelope can therefore
absorb at most one threshold unit initially and at most \(\rho T\) further
threshold units through recovery over the horizon.  This is the origin of the
frontier \(Q=\Delta_c(1+\rho T)\).

% =================================================================
% Phase structure of the absorption benchmarks
% =================================================================

\section{Phase structure of the absorption benchmarks}
\label{sec:phase_diagram}

The preceding sections give explicit scalar-envelope benchmarks for three
effects: finite-horizon capacity, fixed-overhead stage choice, and
finite-recovery temporal allocation.  This section collects them in the
parameters natural for the compartmental threshold model.  All boundaries are
expressed in units of the stability-derived threshold \(\Delta_c\), and safety
in the scalar envelope is a sufficient condition for local threshold safety in
the full symmetric compartmental mechanism.

We use the dimensionless variables
\begin{equation*}
  r:=\frac{Q}{\Delta_c},
  \qquad
  h:=\rho T,
  \qquad
  k:=\frac{K\rho}{\mu-\beta}.
\end{equation*}
Here \(r\) is the load measured in threshold units, \(h\) is the available
recovery over the horizon, and \(k\) is the per-release overhead in the natural
one-shock exposure scale.  Figure~\ref{fig:phase_diagram} shows three exact
slices of this scalar structure.  Panel~(a) shows fixed-horizon feasibility:
the curves \(r=B_n(h)\) mark the capacity of \(n\) equally spaced releases, and
\(r=1+h\) is the limiting frontier.  Panel~(b) shows the complete-relaxation
setup-cost slice: \(k=k_{\rm safe}(r)\) separates the full-safe optimum from an
optimum with positive residual exposure.  Panel~(c) shows a finite-recovery
trajectory at \(n=3\), \(h=2\), \(\lambda=e^{-1}\), and \(r=2.1\): the uniform
split crosses the threshold, while the constant-peak allocation remains below
it.  The panels are complementary benchmarks, not one combined model in which
all effects are optimised simultaneously.

\begin{figure}[t]
  \centering
  \includegraphics[width=\textwidth]{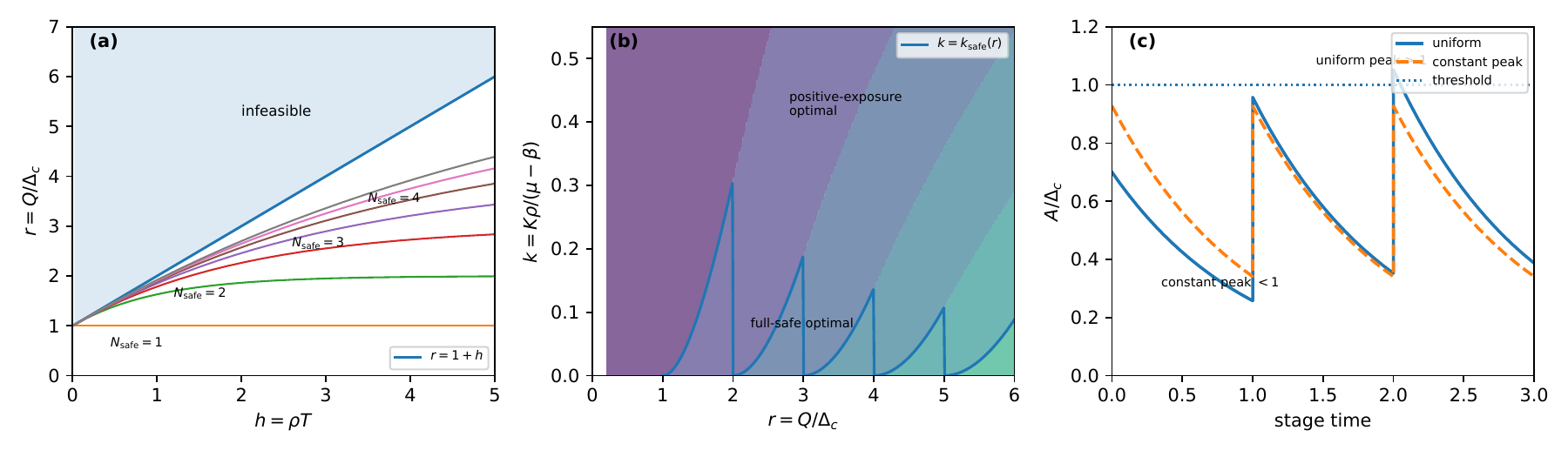}
  \caption{Three scalar threshold-safety benchmarks; in panel~(b), darker shading
above \(k=k_{\rm safe}(r)\) indicates regions with larger cost-optimal release
count \(n^*\)}
  \label{fig:phase_diagram}
\end{figure}

\subsection{Finite-horizon feasibility}
\label{ssec:phase_finite_horizon}

Panel~(a) is the pure capacity slice.  For \(n\ge2\) equally spaced releases
over a horizon \(T\), the inter-release carry-over factor is
\(e^{-\rho T/(n-1)}\).  Proposition~\ref{prop:horizon_limited_capacity} gives
the dimensionless safe capacity
\begin{equation}\label{eq:Bn_phase}
  B_n(h)
  =1+(n-1)\bigl(1-e^{-h/(n-1)}\bigr),
  \qquad h=\rho T,
\end{equation}
with \(B_1(h)=1\).  Equal spacing is not merely a drawing convention: for a
fixed \(n\) and \(T\), it maximises the scalar-envelope capacity among all
spacings.

Since \(B_n(h)\uparrow 1+h\) and \(B_n(h)<1+h\) for every finite
\(n\ge2\), the finite safe release count is
\begin{equation}\label{eq:Nsafefinite_phase}
  N_{\rm safe}(r,h)=
  \begin{cases}
    1,
      &0<r\le1,\\[0.4em]
    \min\{n\ge2:\ r\le B_n(h)\},
      &1<r<1+h,\\[0.4em]
    +\infty,
      &r\ge 1+h.
  \end{cases}
\end{equation}
Thus there are three regimes.  If \(1<r<1+h\), sufficiently fine finite
splitting is threshold-safe.  If \(r=1+h\), the boundary is only supremal: no
finite number of releases attains threshold safety, although the capacities
approach the boundary as \(n\to\infty\).  If
\begin{equation}\label{eq:finite_horizon_infeasible_phase}
  r>1+h,
\end{equation}
threshold safety is impossible even at the supremal scalar-envelope capacity.

\subsection{Cost-driven partial safety at complete relaxation}
\label{ssec:phase_cost}

Panel~(b) shows a different slice: complete relaxation between releases, but a
fixed overhead per release.  This is the standard setup-cost mechanism attached
to the complete-relaxation exposure benchmark.  Its role here is to show that
full threshold safety may be feasible and still fail to be cost-optimal.

For a fixed release count \(n\), the dimensionless objective is
\begin{equation}\label{eq:Jhatn_phase}
  \widehat{\mathcal J}_n(r,k)=nk+L_n(r),
\end{equation}
where
\begin{equation}\label{eq:Ln_phase}
  L_n(r)=
  \begin{cases}
    0, &0\le r\le n,\\[0.4em]
    r-n-n\log(r/n), &r>n.
  \end{cases}
\end{equation}
Let \(N=\lceil r\rceil\).  Since \(L_n(r)=0\) for all \(n\ge N\), the integer
minimisation reduces to \(n=1,\ldots,N\).  For \(r>1\), the fully safe candidate
\(n=N\) is optimal exactly when
\begin{equation}\label{eq:ksafe_phase}
  k\le k_{\rm safe}(r)
  :=
  \min_{1\le m<N}
  \frac{r-m-m\log(r/m)}{N-m}.
\end{equation}
For \(r\le1\), one release is already threshold-safe, so no boundary is needed.
Below \(k_{\rm safe}(r)\), the cost-optimal complete-relaxation policy is fully
safe.  Above it, the optimum uses fewer releases and accepts positive residual
threshold exposure.

The sawtooth form of \(k_{\rm safe}\) comes from the interaction of the integer
safe count \(\lceil r\rceil\) with the quadratic onset of the threshold exposure
just above each integer load level.  Just after \(r\) crosses an integer, the extra exposure removed by one
additional release is small, so even a small overhead can make partial safety
cost-optimal.

\subsection{A finite-recovery trajectory: why allocation matters}
\label{ssec:phase_front_loading}

Panel~(c) returns to finite recovery and illustrates a point that is hidden in
the complete-relaxation slice.  The capacity condition may say that a given
\((n,h,r)\) is safe in principle, but the release allocation still matters.  For
\begin{equation*}
  n=3,
  \qquad
  h=2,
  \qquad
  \lambda=e^{-1},
  \qquad
  r=2.1,
\end{equation*}
one has
\begin{equation}\label{eq:B3_phase}
  B_3(2)=1+2(1-e^{-1})\approx2.264,
  \qquad
  \frac{H_3}{\Delta_c}=\frac{r}{B_3(2)}\approx0.928.
\end{equation}
Thus three releases can keep the scalar reservoir below the threshold, but the
uniform split does not.  With \(q_j=Q/3\), the normalised post-release levels are
\begin{equation*}
  0.700,
  \qquad
  (1+e^{-1})0.700\approx0.958,
  \qquad
  (1+e^{-1}+e^{-2})0.700\approx1.052,
\end{equation*}
so the third stage crosses the threshold.

The peak-optimal allocation is the linear carry-over loading/maintenance
profile
\begin{equation}\label{eq:front_loaded_phase}
  q_1=H_3,
  \qquad
  q_2=q_3=(1-\lambda)H_3,
  \qquad
  H_3=\frac{Q}{B_3(2)}.
\end{equation}
It keeps all three post-release levels equal to \(H_3<\Delta_c\).  The conclusion is that, when the target level is the stability-derived
threshold \(\Delta_c\), an equal split can cross the threshold even though a
threshold-safe allocation with the same \(Q\), \(n\), and \(T\) exists.

\subsection{What the phase diagram shows}
\label{ssec:phase_synthesis}

The figure separates three obstructions.  The first is the threshold barrier:
when \(r>1\), a single release is supercritical.  The second is the calendar
capacity barrier: when \(r>1+h\), the scalar envelope cannot absorb the load
within the horizon, no matter how finely it is split.  The third is the
overhead barrier: when \(k>k_{\rm safe}(r)\) in the complete-relaxation
benchmark, full safety remains feasible but is not cost-optimal.

These statements are deliberately scalar-envelope statements.  By
Proposition~\ref{prop:conservative_envelope}, a schedule that is threshold-safe
in the scalar envelope is also locally threshold-safe for the full symmetric
model.  The converse is not claimed: crossing a scalar-envelope boundary means
that this conservative sufficient condition fails, not that mobilisation is
unavoidable in the full nonlinear system.

% =================================================================
% Discussion
% =================================================================

\section{Discussion}\label{sec:discussion}

The paper is positioned inside compartmental threshold models of political
shock absorption.  The original step is the bridge from a local voter-flow
threshold mechanism to a scalar release-scheduling benchmark.  The threshold
\(\Delta_c\) is obtained from local stability, the exposure functional is tied
to positive logarithmic amplification of the mobilisation coordinate, and the
scalar reservoir is justified by comparison as a conservative envelope for the
nonlinear two-variable system.  Hence a scalar threshold-safe schedule is a
sufficient safety certificate for the full symmetric mechanism.

This bridge gives the model its operations-research interpretation.  After the
reduction, the problem becomes a dispatching and release-scheduling problem
under a threshold constraint: how many stages to use, how much load to allocate
to each stage, how recovery changes capacity, and how fixed setup overhead
changes the cost-optimal stage count.  The complete-relaxation model is a
convex allocation problem with a zero-loss buffer; the overhead model is a
setup-cost trade-off; the fixed-horizon model is a leaky-reservoir capacity
problem.  These are standard OR structures, but here their capacity and loss
levels are inherited from the stability analysis of the compartmental model.

The scalar layer has close analogues in several applied fields.  Table~\ref{tab:scalar_analogues}
summarises the shared skeleton and the distinction used in this paper.

\begin{table}[t]
\centering
\small
\caption{Relation to scalar analogues}
\label{tab:scalar_analogues}
\begin{tabular}{@{}p{0.19\textwidth}p{0.34\textwidth}p{0.38\textwidth}@{}}
\hline
\textbf{Field} & \textbf{Shared scalar skeleton} & \textbf{Difference here}\\
\hline
Pharmacokinetics & First-order elimination; loading/maintenance dosing & Target threshold comes from voter-flow stability.\\[2pt]
Radiotherapy & Fractionation with repair between stages & Exposure is positive log-growth pressure, not BED.\\[2pt]
Reservoir operation & Leaky storage under capacity constraints & Capacity is a conservative envelope of an ODE reservoir.\\[2pt]
Setup-cost scheduling & Fixed overhead per stage & Stage loss is threshold exposure.\\[2pt]
Carbon budgets & Allocation of a remaining load under a threshold & Policy-level analogy, not identical dynamics.\\
\hline
\end{tabular}
\end{table}

The table also delimits the mathematical claim.  The scalar recurrence
\(A_k=\lambda A_{k-1}+q_k\), Jensen allocation, constant-peak dispatch, and
setup-cost comparison are familiar mechanisms.  Their role here is to make the
threshold regimes transparent after a stability-derived reservoir coordinate
has been extracted from the ODE model.

The methodological motivation came from shock splitting in supersonic gas
dynamics \cite{CourantFriedrichs1976,LiepmannRoshko1957,Anderson2003,
HendersonMenikoff1998} and from earlier work on optimal shock-wave systems
\cite{MO1998,MO2003a,MO2006}.  The formal mechanism is different.  Gas-dynamic
shocks are discontinuities in weak solutions of hyperbolic conservation laws,
with Rankine--Hugoniot relations and entropy admissibility.  The present impulse
update is an externally imposed control in an ODE reservoir.  The replacement is
the ODE balance structure: \(\Phi\) dissipates along the autonomous flow,
\(g(A)\) is the near-baseline logarithmic growth pressure, and the positive-part
exposure bounds positive logarithmic amplification.  This is the ODE-level
analogue of deriving the loss from the governing dynamics rather than imposing
an external quadratic penalty.

The interpretation remains deliberately narrow.  The releases \(q_j\) are not
actions that directly move voters between parties or ideological states.  They
represent effective portions of an external load after administrative, fiscal,
communicative, or institutional buffering.  The model is not a forecasting
model, and the quantities \(Q\), \(\Delta_c\), \(\rho\), and \(K\) are parameters
of a stylised reduction.  The scalar boundaries in the phase diagram mark
sufficient safety certificates and conservative capacity limits.  If the scalar
envelope crosses the threshold, the full nonlinear system may still be moderated
by reservoir depletion and mobilisation saturation; the converse implication is
not claimed.

Several extensions are natural.  The first is the finite-recovery
cumulative-exposure problem: one can minimise
\[
  \sum_{k=1}^{n-1}\int_0^\tau [g(A_k e^{-\rho s})]_+\,ds
  +\int_0^\infty [g(A_n e^{-\rho s})]_+\,ds,
  \qquad A_k=\lambda A_{k-1}+q_k,
\]
over \(q_k\ge0\) with \(\sum_k q_k=Q\).  This remains a convex problem for the
linear envelope, but it is not the same as the constant-peak capacity benchmark.
Other extensions include stochastic loads and chance constraints, state-
dependent recovery, heterogeneous or network reservoirs with Perron--Frobenius
safe sets, and joint optimisation of release times and release sizes.

In summary, the paper introduces a conservative threshold-safety layer for a
specific class of compartmental voter-flow models.  The scalar formulae are
closely related to classical leaky-reservoir models; their role is to make
explicit how a stability-derived threshold, finite recovery, fixed horizon, and
per-stage overhead interact in this compartmental setting.

%% =============================================================
%% Statements and Declarations
%% =============================================================

\section*{Statements and Declarations}

\noindent\textbf{Competing interests.} The author declares no competing interests.

\medskip
\noindent\textbf{Funding.} No funding was received for this work.

\medskip
\noindent\textbf{Data availability.} No datasets were generated or analysed during the current study.

\medskip
\noindent\textbf{Code availability.} The code used to generate the illustrative figures is available from the author upon reasonable request.

%% =============================================================
%% References 
%% =============================================================

\bibliographystyle{unsrt}
\bibliography{shock_absorption_references}

@article{McCluskeySantoprete2018,
  author  = {McCluskey, C. Connell and Santoprete, Manuele},
  title   = {A Bare-bones Mathematical Model of Radicalization},
  journal = {Journal of Dynamics and Games},
  volume  = {5},
  number  = {3},
  pages   = {243--264},
  year    = {2018},
  doi     = {10.3934/jdg.2018016}
}

@article{SantopreteXu2018,
  author  = {Santoprete, Manuele and Xu, Fei},
  title   = {Global Stability in a Mathematical Model of De-radicalization},
  journal = {Physica A: Statistical Mechanics and its Applications},
  volume  = {509},
  pages   = {151--161},
  year    = {2018},
  doi     = {10.1016/j.physa.2018.06.027}
}

@article{Santoprete2019,
  author  = {Santoprete, Manuele},
  title   = {Countering Violent Extremism: A Mathematical Model},
  journal = {Applied Mathematics and Computation},
  volume  = {358},
  pages   = {314--329},
  year    = {2019},
  doi     = {10.1016/j.amc.2019.04.054}
}

@book{CourantFriedrichs1976,
  author    = {Courant, Richard and Friedrichs, Kurt Otto},
  title     = {Supersonic Flow and Shock Waves},
  series    = {Applied Mathematical Sciences},
  publisher = {Springer},
  address   = {New York},
  year      = {1976},
  doi       = {10.1007/978-1-4684-9364-1}
}

@book{LiepmannRoshko1957,
  author    = {Liepmann, Hans Wolfgang and Roshko, Anatol},
  title     = {Elements of Gasdynamics},
  publisher = {Wiley},
  address   = {New York},
  year      = {1957}
}

@book{Anderson2003,
  author    = {Anderson, John D.},
  title     = {Modern Compressible Flow: With Historical Perspective},
  edition   = {3},
  publisher = {McGraw-Hill},
  address   = {New York},
  year      = {2003}
}

@article{HendersonMenikoff1998,
  author  = {Henderson, Le Roy F. and Menikoff, Ralph},
  title   = {Triple-shock Entropy Theorem and Its Consequences},
  journal = {Journal of Fluid Mechanics},
  volume  = {366},
  pages   = {179--210},
  year    = {1998},
  doi     = {10.1017/S0022112098001244}
}

@article{MO1998,
  author  = {Malozemov, V. N. and Omelchenko, A. V. and Uskov, V. N.},
  title   = {On the Minimization of Total-pressure Losses in Decelerating a Supersonic Flow},
  journal = {Prikladnaya Matematika i Mekhanika},
  volume  = {62},
  number  = {6},
  pages   = {1014--1020},
  year    = {1998},
  note    = {English translation: Journal of Applied Mathematics and Mechanics, 62(6), 939--944}
}

@article{MO2003a,
  author  = {Malozemov, V. N. and Omelchenko, A. V.},
  title   = {On the Construction of Optimal Shock Wave Systems},
  journal = {Computational Mathematics and Mathematical Physics},
  volume  = {43},
  number  = {4},
  pages   = {508--520},
  year    = {2003},
  note    = {Russian original: Zhurnal Vychislitel'noi Matematiki i Matematicheskoi Fiziki, 43(4), 533--545}
}

@article{MO2006,
  author  = {Malozemov, V. N. and Omelchenko, A. V.},
  title   = {On a Discrete Optimal Control Problem with an Explicit Solution},
  journal = {Journal of Industrial and Management Optimization},
  volume  = {2},
  number  = {1},
  pages   = {55--62},
  year    = {2006},
  doi     = {10.3934/jimo.2006.2.55}
}

@book{RowlandTozer2011,
  author    = {Rowland, Malcolm and Tozer, Thomas N.},
  title     = {Clinical Pharmacokinetics and Pharmacodynamics: Concepts and Applications},
  edition   = {4},
  publisher = {Wolters Kluwer Health/Lippincott Williams \& Wilkins},
  address   = {Philadelphia},
  year      = {2011},
  isbn      = {9780781750097}
}

@book{Bauer2014,
  author    = {Bauer, Larry A.},
  title     = {Applied Clinical Pharmacokinetics},
  edition   = {3},
  publisher = {McGraw-Hill Medical},
  address   = {New York},
  year      = {2014},
  isbn      = {9780071794589}
}

@article{WithersThamesPeters1983,
  author  = {Withers, H. Rodney and Thames, Howard D. and Peters, Lester J.},
  title   = {A New Isoeffect Curve for Change in Dose per Fraction},
  journal = {Radiotherapy and Oncology},
  volume  = {1},
  number  = {2},
  pages   = {187--191},
  year    = {1983},
  doi     = {10.1016/S0167-8140(83)80021-8}
}

@article{Fowler1989,
  author  = {Fowler, John F.},
  title   = {The Linear-quadratic Formula and Progress in Fractionated Radiotherapy},
  journal = {British Journal of Radiology},
  volume  = {62},
  number  = {740},
  pages   = {679--694},
  year    = {1989},
  doi     = {10.1259/0007-1285-62-740-679}
}

@article{Yeh1985,
  author  = {Yeh, William W.-G.},
  title   = {Reservoir Management and Operations Models: A State-of-the-Art Review},
  journal = {Water Resources Research},
  volume  = {21},
  number  = {12},
  pages   = {1797--1818},
  year    = {1985},
  doi     = {10.1029/WR021i012p01797}
}

@article{WagnerWhitin1958,
  author  = {Wagner, Harvey M. and Whitin, Thomson M.},
  title   = {Dynamic Version of the Economic Lot Size Model},
  journal = {Management Science},
  volume  = {5},
  number  = {1},
  pages   = {89--96},
  year    = {1958},
  doi     = {10.1287/mnsc.5.1.89}
}

@article{Allen2009,
  author  = {Allen, Myles R. and Frame, David J. and Huntingford, Chris and Jones, Chris D. and Lowe, Jason A. and Meinshausen, Malte and Meinshausen, Nicolai},
  title   = {Warming Caused by Cumulative Carbon Emissions towards the Trillionth Tonne},
  journal = {Nature},
  volume  = {458},
  number  = {7242},
  pages   = {1163--1166},
  year    = {2009},
  doi     = {10.1038/nature08019}
}

@article{Meinshausen2009,
  author  = {Meinshausen, Malte and Meinshausen, Nicolai and Hare, William and Raper, Sarah C. B. and Frieler, Katja and Knutti, Reto and Frame, David J. and Allen, Myles R.},
  title   = {Greenhouse-gas Emission Targets for Limiting Global Warming to 2 Degrees C},
  journal = {Nature},
  volume  = {458},
  number  = {7242},
  pages   = {1158--1162},
  year    = {2009},
  doi     = {10.1038/nature08017}
}

@misc{Omelchenko2026Threshold,
  author        = {Omelchenko, Alexander},
  title         = {Threshold Dynamics of Voter Radicalization on the Probability Simplex},
  year          = {2026},
  archivePrefix = {arXiv},
  eprint        = {2603.07862},
  primaryClass  = {math.DS},
  note          = {arXiv:2603.07862v1}
}

@article{AzizahBakhtiarSianturi2023,
  author    = {Azizah, Nur and Bakhtiar, Toni and Sianturi, Paian},
  title     = {Modeling and Control of the Extreme Ideology Transmission
               Dynamics in a Society},
  journal   = {Jambura Journal of Mathematics},
  volume    = {5},
  number    = {1},
  year      = {2023},
  doi       = {10.34312/jjom.v5i1.15583},
  publisher = {Universitas Negeri Gorontalo}
}

@article{TapsobaSimporeTraore2025,
  author    = {Tapsoba, Wendpanga Alain and Simpor{\'e}, Yacouba and
               Traor{\'e}, Oumar},
  title     = {Mathematical Modeling and Optimal Control of the Dynamics of an
               Internal Radicalization Process},
  journal   = {Abstract and Applied Analysis},
  volume    = {2025},
  number    = {1},
  pages     = {9956552},
  year      = {2025},
  doi       = {10.1155/aaa/9956552},
  publisher = {John Wiley \& Sons}
}

\end{document}